\newcommand{\norm}[1]{\left \lVert#1\right \rVert}
\newtheorem{theorem}{Theorem}[section]
\newtheorem{lemma}{Lemma}[section]
\newtheorem{assumption}{Assumption}[section]
\theoremstyle{definition}
\newtheorem{definition}{Definition}[section]
\theoremstyle{remark}
\newtheorem{remark}{Remark}[section]
\definecolor{cucol}{rgb}{0,0,0.8}
\definecolor{afcol}{rgb}{1,0,0}
\numberwithin{equation}{section}
\begin{document}
	\title{Asymptotic stability analysis of Riemann-Liouville fractional stochastic neutral differential equations} 
	
	\author[]{Arzu Ahmadova \thanks{Email: \texttt{arzu.ahmadova@emu.edu.tr}}}
	\author[]{Nazim I. Mahmudov\thanks{ Corresponding author. Email: \texttt{nazim.mahmudov@emu.edu.tr}}}
	

	\affil{Department of Mathematics, Eastern Mediterranean University,  Famagusta, 99628
		T. R. Northern Cyprus}
	
	\date{}
	
		
	\maketitle

	\begin{abstract}
		The novelty of our paper is to establish results on asymptotic stability of mild solutions in $p$th moment to Riemann-Liouville fractional stochastic neutral differential equations (for short Riemann-Liouville FSNDEs) of order $\alpha \in (\frac{1}{2},1)$ using a Banach's contraction mapping principle. The core point of this paper is to derive the mild solution of FSNDEs involving Riemann-Liouville fractional time-derivative by applying the stochastic version of variation of constants formula. The results are obtained with the help of the theory of fractional differential equations, some properties of Mittag-Leffler functions and asymptotic analysis under the assumption that the corresponding fractional stochastic neutral dynamical system is asymptotically stable.

		\textit{Keywords:} Riemann-Liouville fractional derivative, fractional stochastic neutral dynamical systems, existence and uniqueness, asymptotic stability, continuity of mild solutions in $p$th moment
	\end{abstract}
	
	

	\section{Introduction}\label{Sec:intro}
	
	Over the years, many results have been investigated on the theory and applications of stochastic differential equations (SDEs) \cite{ito,oksendal,prato}. The deterministic models often oscillate due to noise. Certainly, the extension of these models is essential to consider stochastic models, in which the connected parameters are
	considered as appropriate Brownian motion and stochastic processes. The modeling of most problems in real-world problems is described by stochastic differential equations rather than deterministic equations. Thus, it is of great importance to design stochastic effects in the study of fractional-order dynamical systems. In particular, fractional stochastic differential equations (FSDEs) which are a generalization of differential equations by the use of fractional and stochastic calculus are more popular due to their applications in modeling and mathematical finance. Recently, FSDEs are intensively applied to model mathematical problems in finance \cite{farhadi,tien}, dynamics of complex systems in engineering \cite{wang} and other areas \cite{gangaram,mandelbrot}. Most of the results on fractional stochastic dynamical systems are limited to prove existence and uniqueness of mild solutions using fixed point theorem \cite{arzu,barbu,feng-li,mahmudov,rodkina,sakthivel,taniguchi}.

	Several outcomes have been investigated on the qualitative theory and applications of fractional stochastic functional differential equations (FSDEs) \cite{arzu-ismail-mahmudov,ismail,shen,balachandran}. For instance, Mahmudov and McKibben \cite{mahmudov-mckibben} studied the approximate controllability of fractional evolution equations involving generalized Riemann-Liouville fractional derivative in Hilbert spaces. Moreover, the stability theory for stochastic differential equations has been considered by means of exponential functions which lead to attain more efficient consequences in \cite{mao1}. Especially, Cao and Zhu \cite{cao} considered $p$th exponential stability of impulsive stochastic functional differential equations based on vector Lyapunov function. Based on average dwell-time method, Razumikhin-type technique and vector Lyapunov function, some novel stability method are obtained for impulsive  stochastic functional differential systems. On a study of different types of stability studies for FSDEs can be found in \cite{agarwal,ahmadova-mahmudov,tarif,Karaman,mao2,ragusa,wangj}.
	
	Results on the asymptotic behavior of solutions of fractional differential equations with Caputo and Rieamnn-Liouville fractional time-derivative are relatively scarce in the literature. Mahmudov \cite{mahmudov} derived an explicit solution formula to linear inhomogeneous delayed Langevin equation involving two Riemann-Liouville fractional derivatives and  studied existence and uniqueness, and Ulam-Hyers stability of solutions. In \cite{cong}, Cong et al. investigate the asymptotic behavior of solutions of the perturbed linear fractional differential system. Cong et al. \cite{stefan} proved the theorem of linearized asymptotic stability for fractional differential equations. More precisely, they showed that an equilibrium of a nonlinear Caputo fractional differential equation is asymptotically stable if its linearization at the equilibrium is asymptotically stable. There are only a few papers related to asymptotic stability of solutions of fractional stochastic differential equations which can be found in \cite{doan,sakthivel-ren}. Sakthivel et al. \cite{mahmudov-sakthivel} studied existence and asymptotic stability in $p$th moment of a mild solution to a class of nonlinear fractional neutral stochastic differential equations with infinite delays in Hilbert spaces. The same asymptotic stability in $p$th moment of a mild solutions of nonlinear impulsive stochastic differential equations and impulsive stochastic partial differential equations with infinite delays was discussed in \cite{luo1} and \cite{luo2}, respectively.
	
	To the best of our knowledge, the asymptotic stability of mild solutions for fractional stochastic neutral differential equations with Riemann-Liouville fractional derivative are an untreated topic in the present literature. Due to lack of asymptotic stability of mild solutions to Riemann-Liouville FSNDEs, this motivates us to establish new results on the asymptotic analysis of fractional stochastic differential equations with Riemann-Liouville fractional time-derivative involving matrix coefficients.
	
	Therefore, the plan of this paper is systematized as below: Section \ref{Sec:prel} is a preparatory section where we recall some basic notions and results from fractional calculus and fractional differential equations. Then we resort the setting for main results of the theory and we impose certain assumptions, definitions of stability and asymptotic stability in $p$th moment of mild solutions to Riemann-Liouville FSNDEs stochastic analysis. In Section \ref{stochastic}, first we verify the continuity of operator in $p$th moment on $[0,\infty)$. Then we show global existence and uniqueness of mild solution under various assumptions by a Banach's contraction mapping principle. Section \ref{asymptotic} is devoted to proving asymptotic stability of mild solutions to Riemann-Liouville FSNDEs. With the help of properties of Mittag-Leffler functions, we show that $\Psi$ is well-defined. Finally, we study asymptotic stability in $p$th of Riemann-Liouville FSNDEs of fractional-order $\alpha \in (\frac{1}{2},1)$. Section \ref{concl} is for the conclusion and future work by providing several open problems.
	
	To conclude this introductory section, we introduce the norm of the matrix which are used throughout the paper. For any constant matrix $A=(\,a_{ij})\,_{n\times n} \in \mathbb{R}^{n\times n}$, the norm of the matrix $A$, according to the maximum norm on $\mathbb{R}^{n}$ is
	\begin{equation*}
		\|A\|=\max_{1\leq i\leq  n}\sum_{j=1}^{n}|a_{ij}|.
	\end{equation*}
	Throughout this paper, we assume that $p\geq 2$ is an integer.
	
	\section{Mathematical preliminaries and statement of the main results}\label{Sec:prel}
	
	We embark on this section by briefly presenting some necessary facts and definitions from fractional calculus and special functions which are used throughout the paper.
	\subsection{Fractional Calculus}
	
	\begin{definition}[\cite{miller-ross}, \cite{podlubny}]
		The Riemann--Liouville integral operator of fractional order $\alpha>0$ is defined by
		\begin{equation*}
			\label{RLint:def}
			I^{\alpha}_{0+}g(t)=\frac{1}{\Gamma(\alpha)}\int_0^t(t-r)^{\alpha-1}g(r)\,\mathrm{d}r \\,\quad \text{for} \quad t>0.
		\end{equation*}
		The Riemann--Liouville derivative operator of fractional order $\alpha>0$ is defined by
		\begin{equation*}
			D^{\alpha}_{0+}g(t)=\frac{\mathrm{d}^n}{\mathrm{d}t^n}\left(I^{n-\alpha}_{0+}g(t)\right), \quad \text{where}\quad n-1<\alpha \leq n.
		\end{equation*}
	\end{definition}
	
	\begin{definition}[\cite{diethelm,podlubny,samko}]
		Suppose that $\alpha>0$, $t>0$. The Caputo derivative operator of fractional order $\alpha$ is defined by:
		\begin{equation*}
			\prescript{C}{}D^{\alpha}_{0+}g(t)=\prescript{}{}I^{n-\alpha}_{0+}\left(\frac{\mathrm{d}^n}{\mathrm{d}t^n}g(t)\right), \quad \text{where}\quad n-1<\alpha \leq  n.
		\end{equation*}
		In particular, for $\alpha \in (0,1)$
		\begin{equation*}
			\prescript{}{}I^{\alpha}_{0^{+}}\prescript{C}{}D^{\alpha}_{0^{+}}f(t)=f(t)-f(0).
		\end{equation*}
	\end{definition}
	The Riemann--Liouville fractional integral operator and the Caputo fractional derivative have the following property for $\alpha \geq 0$ \cite{kilbas,podlubny}:
	\begin{equation*} \prescript{}{}I^{\alpha}_{0+}(\prescript{C}{}D^{\alpha}_{0+}g(t))=g(t)-\sum_{k=0}^{n-1}\frac{t^{k}g^{(k)}(0)}{\Gamma(k+1)}.
	\end{equation*}
	The relationship between the Riemann--Liouville and Caputo fractional derivatives are as follows:
	\begin{equation*}
		\prescript{C}{}D^{\alpha}_{0+}g(t)=\prescript{}{}D^{\alpha}_{0+}g(t)-\sum_{k=0}^{n-1}\frac{t^{k-\alpha}g^{(k)}(0)}{\Gamma(k-\alpha+1)}, \quad \alpha\geq 0.
	\end{equation*}
	\begin{lemma}\label{RL}
		The relationship between Riemann-Liouville fractional derivative and integral as below:
		
		(i)\quad  If $f \in C(0,T]$, then for any point $t\in (0,T]$
		\begin{equation*}
			\prescript{RL}{}{D_{0+}^{\alpha}}\left(\prescript{}{}{I_{0+}^{\alpha}} f(t)\right) =f(t).
		\end{equation*}
		(ii)\quad If $f \in C(0,T]$ and $\prescript{}{}{I_{0+}^{1-\alpha}}f(t) \in C(0,T]$, then for any point $t\in (0,T]$
		\begin{align*}
			\prescript{}{}{I_{0+}^{\alpha}}\left( \prescript{RL}{}{D_{0+}^{\alpha}}f(t)\right) =f(t)-\frac{\prescript{}{}{I_{0+}^{1-\alpha}} f(t)|_{t=0}}{\Gamma(\alpha)}t^{\alpha-1}.
		\end{align*}
	\end{lemma}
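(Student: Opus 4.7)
The plan is to treat both identities as consequences of the semigroup property $\prescript{}{}I^{a}_{0+}\prescript{}{}I^{b}_{0+}=\prescript{}{}I^{a+b}_{0+}$ for the Riemann--Liouville integral---which follows directly from Fubini's theorem and the Beta integral---combined with the fundamental theorem of calculus and a Leibniz-type differentiation rule for the convolution defining $\prescript{}{}I^{\alpha}_{0+}$. Since the surrounding paper works with $\alpha\in(\tfrac{1}{2},1)$, I may take $n=1$, so that $\prescript{RL}{}D^{\alpha}_{0+}g(t)=\frac{\mathrm{d}}{\mathrm{d}t}\prescript{}{}I^{1-\alpha}_{0+}g(t)$.

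For part (i), I would simply unfold the definition,
\[
\prescript{RL}{}D^{\alpha}_{0+}\bigl(\prescript{}{}I^{\alpha}_{0+}f(t)\bigr)=\frac{\mathrm{d}}{\mathrm{d}t}\bigl(\prescript{}{}I^{1-\alpha}_{0+}\prescript{}{}I^{\alpha}_{0+}f(t)\bigr)=\frac{\mathrm{d}}{\mathrm{d}t}\prescript{}{}I^{1}_{0+}f(t),
\]
where the second equality is the semigroup property. Continuity of $f$ on $(0,T]$ together with the fundamental theorem of calculus then identifies the right-hand side with $f(t)$, establishing (i).

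For part (ii), the critical auxiliary identity is
\[
\prescript{}{}I^{\alpha}_{0+}\frac{\mathrm{d}}{\mathrm{d}t}\psi(t)=\frac{\mathrm{d}}{\mathrm{d}t}\prescript{}{}I^{\alpha}_{0+}\psi(t)-\frac{\psi(0)}{\Gamma(\alpha)}t^{\alpha-1},
\]
which I would prove via the substitution $u=t-s$ in the convolution, rewriting $\int_{0}^{t}(t-s)^{\alpha-1}\psi(s)\,\mathrm{d}s=\int_{0}^{t}u^{\alpha-1}\psi(t-u)\,\mathrm{d}u$, and differentiating in $t$ to pick up a boundary contribution $t^{\alpha-1}\psi(0)$ from the upper limit together with the convolution $\int_{0}^{t}u^{\alpha-1}\psi'(t-u)\,\mathrm{d}u$. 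Taking $\psi=\prescript{}{}I^{1-\alpha}_{0+}f$---so that $\psi(0)=\prescript{}{}I^{1-\alpha}_{0+}f(t)|_{t=0}$ is well defined via the continuous extension guaranteed by $\prescript{}{}I^{1-\alpha}_{0+}f\in C(0,T]$---and invoking the semigroup property $\prescript{}{}I^{\alpha}_{0+}\prescript{}{}I^{1-\alpha}_{0+}=\prescript{}{}I^{1}_{0+}$ converts the leading term into $\frac{\mathrm{d}}{\mathrm{d}t}\prescript{}{}I^{1}_{0+}f(t)=f(t)$, yielding the claimed formula.

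The main obstacle is rigorously justifying the differentiation under the integral sign in the Leibniz step, because with $\alpha<1$ the kernel $u^{\alpha-1}$ is singular at $u=0$; one cannot naively apply the classical Leibniz rule when $\psi'$ may be unbounded near the origin. The clean resolution is to split the integration interval as $[0,\varepsilon]\cup[\varepsilon,t]$, control the first piece by integrability of $u^{\alpha-1}$ combined with continuity of $\psi$, differentiate the second piece by dominated convergence, and then send $\varepsilon\downarrow 0$; equivalently, one verifies the identity for smooth $\psi$ and extends by density using the continuity hypothesis on $\prescript{}{}I^{1-\alpha}_{0+}f$. Once this boundary term is put on firm footing, both parts of the lemma fall out mechanically.
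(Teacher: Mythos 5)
The paper states Lemma~\ref{RL} without proof, citing it implicitly to the standard fractional-calculus literature (Samko--Kilbas--Marichev, Kilbas et al., Podlubny), so there is no in-paper argument to compare against; your proposal is essentially the classical textbook derivation and it is sound. Part (i) via the semigroup property $\prescript{}{}{I^{1-\alpha}_{0+}}\prescript{}{}{I^{\alpha}_{0+}}=\prescript{}{}{I^{1}_{0+}}$ plus the fundamental theorem of calculus is exactly right. For part (ii), your auxiliary identity $\prescript{}{}{I^{\alpha}_{0+}}\psi'(t)=\frac{\mathrm{d}}{\mathrm{d}t}\prescript{}{}{I^{\alpha}_{0+}}\psi(t)-\frac{\psi(0)}{\Gamma(\alpha)}t^{\alpha-1}$ with $\psi=\prescript{}{}{I^{1-\alpha}_{0+}}f$ gives the stated formula correctly; note, though, that you can sidestep the delicate Leibniz differentiation of the singular kernel entirely by writing $\psi(s)-\psi(0)=\prescript{}{}{I^{1}_{0+}}\psi'(s)$ (valid once $\psi$ is absolutely continuous, which is the natural hypothesis for the RL derivative to exist), applying $\prescript{}{}{I^{\alpha}_{0+}}$ and the semigroup property to get $\prescript{}{}{I^{1}_{0+}}\prescript{}{}{I^{\alpha}_{0+}}\psi'=\prescript{}{}{I^{\alpha}_{0+}}\psi-\psi(0)\frac{t^{\alpha}}{\Gamma(\alpha+1)}$, and then differentiating once; this uses only Fubini and the fundamental theorem of calculus. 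Two hypotheses deserve a remark: $f\in C(0,T]$ alone does not guarantee integrability at $0$ (needed for $\prescript{}{}{I^{\alpha}_{0+}}f$ and for Fubini in the semigroup step), and $\prescript{}{}{I^{1-\alpha}_{0+}}f\in C(0,T]$ does not by itself give existence of the limit $\prescript{}{}{I^{1-\alpha}_{0+}}f(t)|_{t=0}$; both are implicitly assumed in the statement, and your proof should flag that it inherits them rather than derives them.
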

	The Mittag-Leffler function is a generalization of the exponential function, first proposed as a single parameter function of one variable by using an infinite series \cite{1903}. Extensions to two or three parameters are well known and thoroughly studied in textbooks such as \cite{gorenflo}.
	\begin{definition} \label{Def:bML}
		We consider the matrix Mittag-Leffler functions with one and two parameters which are defined by
		\begin{align*}
			&E_{\alpha}(t^{\alpha}A)=\sum_{k=0}^{\infty}\frac{A^kt^{k\alpha}}{\Gamma(k\alpha+1)},\nonumber\\
			&E_{\alpha,\beta}(t^{\alpha}A)=\sum_{k=0}^{\infty}\frac{A^kt^{k\alpha}}{\Gamma(k\alpha+\beta)}.
		\end{align*}
	\end{definition}
	In through this paper, we define
	\begin{equation*}
		\Lambda^{s}_{\alpha}\coloneqq\{\lambda \in \mathbb{C}\backslash\{0\}: |\arg(\lambda)| > \frac{\alpha\pi}{2}\}.
	\end{equation*}
	For any matrix $A \in \mathbb{R}^{n\times n}$, the set $spec(A)$ is the spectrum of $A$, i.e
	\begin{equation*}
		spec(A)\coloneqq \left\lbrace \lambda \in \mathbb{C} : \lambda \quad \text{is an eigenvalue of the matrix} \quad A \right\rbrace .
	\end{equation*}
	It is well-known that the trivial solution of linear time-invariant fractional differential equation is asymptotically stable if and only if the spectrum $spec(A)$ of the matrix $A\in \mathbb{R}^{n\times n}$ satisfies the condition
	\begin{equation*}
		spec(A)\subset \Lambda^{s}_{\alpha}.
	\end{equation*}
	In our further consideration, we will use the following result.
	\begin{lemma}[\label{Lem21}\cite{htuan}]
		Let $A\in \mathbb{R}^{n\times n}$ and suppose that $spec(A)\subset \Lambda^{s}_{\alpha}$. Then the following statements are valid.
		
		\begin{itemize}
			\item There exists $t_{0}>0$ and a positive constant $\tilde{M}$ which depends on parameters $t_{0}, \alpha, A$ such that
			\begin{equation*}
				t^{\alpha-1}\|E_{\alpha,\alpha}(t^{\alpha}A)\|\leq \frac{\tilde{M}}{t^{\alpha+1}}, \quad \forall t\geq t_{0}.
			\end{equation*}
			\item The quantity
			\begin{equation*}
				t^{1-\alpha}\int_{0}^{t}(t-\tau)^{\alpha-1}\|E_{\alpha,\alpha}((t-\tau)^{\alpha}A)\|\tau^{\alpha-1}\mathrm{d}\tau
			\end{equation*}
			is bounded on $[0,\infty)$, i.e.
			\begin{equation*}
				\sup_{t\geq 0} t^{1-\alpha}\int_{0}^{t}(t-\tau)^{\alpha-1}\|E_{\alpha,\alpha}((t-\tau)^{\alpha}A)\|\tau^{\alpha-1}\mathrm{d}\tau<\infty.
			\end{equation*}
		\end{itemize}
	\end{lemma}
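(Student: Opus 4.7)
My plan is to handle the two parts in sequence, drawing on a single essential ingredient: the sectorial asymptotic expansion of the Mittag--Leffler function. For every $N\geq 1$ and every $z\in\Lambda^{s}_{\alpha}$ with $|z|\to\infty$,
\begin{equation*}
E_{\alpha,\alpha}(z)=-\sum_{k=1}^{N}\frac{z^{-k}}{\Gamma(\alpha-k\alpha)}+O\!\left(|z|^{-N-1}\right).
\end{equation*}
Since $1/\Gamma(0)=0$ the $k=1$ term vanishes, while the next term involves $\Gamma(-\alpha)$, which is finite because $\alpha\in(\tfrac{1}{2},1)$. Consequently $|E_{\alpha,\alpha}(z)|=O(|z|^{-2})$ uniformly on $\Lambda^{s}_{\alpha}$.

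For part (i), I would lift this estimate to matrices via the Jordan decomposition $A=PJP^{-1}$, so that $E_{\alpha,\alpha}(t^{\alpha}A)=P\,E_{\alpha,\alpha}(t^{\alpha}J)\,P^{-1}$. Every eigenvalue $\lambda\in spec(A)$ lies in $\Lambda^{s}_{\alpha}$, hence $t^{\alpha}\lambda$ remains in the sector for all $t>0$ and $|t^{\alpha}\lambda|\to\infty$ as $t\to\infty$; the off-diagonal entries of $E_{\alpha,\alpha}(t^{\alpha}J)$ involve derivatives of the scalar function which satisfy analogous sectorial expansions with the same leading decay rate (polynomial-in-$t$ factors from Jordan blocks are absorbed into the constant). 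Picking $t_{0}>0$ large enough to enter the asymptotic regime produces $C>0$ with $\|E_{\alpha,\alpha}(t^{\alpha}A)\|\leq C\,t^{-2\alpha}$ for $t\geq t_{0}$; multiplying by $t^{\alpha-1}$ yields the desired bound with $\tilde{M}=C$.

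For part (ii) I would split the inner integral at $\tau=t/2$ when $t$ is large, treating $[0,t_{0}]$ separately. On $[0,t_{0}]$ the continuous matrix function $E_{\alpha,\alpha}(\cdot)$ is bounded on the compact set $\{(t-\tau)^{\alpha}A:\tau\in[0,t]\}$ by some $K>0$, and the Beta identity
\begin{equation*}
\int_{0}^{t}(t-\tau)^{\alpha-1}\tau^{\alpha-1}\,\mathrm{d}\tau=B(\alpha,\alpha)\,t^{2\alpha-1}
\end{equation*}
together with the factor $t^{1-\alpha}$ gives the upper bound $K\,B(\alpha,\alpha)\,t^{\alpha}$, which is uniformly bounded on $[0,t_{0}]$. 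For $t\geq t_{0}$, on $[0,t/2]$ I would apply part (i) to dominate $(t-\tau)^{\alpha-1}\|E_{\alpha,\alpha}((t-\tau)^{\alpha}A)\|\leq \tilde{M}(t/2)^{-\alpha-1}$ and integrate the weight $\tau^{\alpha-1}$, producing a contribution of order $t^{-\alpha}$ after multiplication by $t^{1-\alpha}$. On $[t/2,t]$ I would bound the decreasing weight $\tau^{\alpha-1}\leq (t/2)^{\alpha-1}$ and substitute $s=t-\tau$ to recognize the remaining integral as $\int_{0}^{t/2}s^{\alpha-1}\|E_{\alpha,\alpha}(s^{\alpha}A)\|\,\mathrm{d}s$, which is uniformly dominated by the convergent improper integral $\int_{0}^{\infty}s^{\alpha-1}\|E_{\alpha,\alpha}(s^{\alpha}A)\|\,\mathrm{d}s$ (its integrand is $O(s^{\alpha-1})$ near $0$ and $O(s^{-\alpha-1})$ near infinity by part (i)); the remaining factor $t^{1-\alpha}(t/2)^{\alpha-1}=2^{1-\alpha}$ is uniformly bounded. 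The main obstacle is the rigorous passage from scalar to matrix arguments when $A$ carries non-trivial Jordan blocks, since $E_{\alpha,\alpha}(t^{\alpha}J)$ then involves derivatives of $E_{\alpha,\alpha}$; verifying that these derivatives inherit the same sectorial $O(|z|^{-2})$ decay rate is the delicate step on which the whole argument rests.
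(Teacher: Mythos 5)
The paper does not actually prove this lemma --- it is imported verbatim from \cite{htuan} --- so the only benchmark is the standard argument there, which your outline essentially reconstructs: the sectorial asymptotic expansion of $E_{\alpha,\alpha}$, the cancellation $1/\Gamma(0)=0$ giving $O(|z|^{-2})$, and the splitting of the convolution integral. Most of your plan is sound: the scalar estimate, the reduction of part (ii) to part (i) via the split at $\tau=t/2$ together with the Beta identity, and the convergence of $\int_{0}^{\infty}s^{\alpha-1}\|E_{\alpha,\alpha}(s^{\alpha}A)\|\,\mathrm{d}s$ all go through (modulo two harmless adjustments: the use of part (i) on $[0,t/2]$ requires $t\geq 2t_{0}$, and the expansion is uniform only on closed subsectors $|\arg z|\geq\mu>\alpha\pi/2$, which suffices because $spec(A)$ is finite).

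The genuine gap is in the Jordan-block step, and it sits exactly where you locate the difficulty --- but the target you set yourself there is the wrong one and would not close the argument. The $(i,i+j)$ entry of $E_{\alpha,\alpha}(t^{\alpha}J)$ for a Jordan block with eigenvalue $\lambda$ is $\frac{t^{j\alpha}}{j!}E^{(j)}_{\alpha,\alpha}(t^{\alpha}\lambda)$. If, as you propose, the derivatives only inherit the \emph{same} decay $E^{(j)}_{\alpha,\alpha}(z)=O(|z|^{-2})$, this entry is $O(t^{(j-2)\alpha})$, which is not $O(t^{-2\alpha})$ for $j\geq 1$ and does not even decay for $j\geq 2$; and a factor $t^{j\alpha}$ that grows in $t$ cannot be ``absorbed into the constant'' on $[t_{0},\infty)$. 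What is actually needed --- and what is true --- is the stronger estimate $E^{(j)}_{\alpha,\alpha}(z)=O(|z|^{-2-j})$ on closed subsectors, which exactly cancels the $t^{j\alpha}$ factor. This follows either by differentiating the asymptotic expansion (justified via Cauchy's integral formula on circles of radius proportional to $|z|$ contained in a slightly larger subsector), or from the identity $\alpha z E'_{\alpha,\alpha}(z)=E_{\alpha,\alpha-1}(z)-(\alpha-1)E_{\alpha,\alpha}(z)$ combined with $1/\Gamma(-1)=0$, iterated for higher $j$. With that correction your argument is complete; as written, part (i), and hence part (ii), fails for non-diagonalizable $A$.
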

	\textbf{Variation of constants method} The first step in our approach is to use the fractionally modified version of the variation of constants method which is established in \cite[Theorem 7.2 and Remark 7.1]{diethelm} in order to obtain the solution of \eqref{fstoc} by applying stochastic version of variation of constants formula in Section \ref{stochastic}. Namely, the solution of a single inhomogeneous Riemann-Liouville fractional differential equation of the form
	\begin{align}\label{RLfde}
		\begin{cases}
			\prescript{RL}{}{D_{0+}^{\alpha}}y(t)=A y(t)+h(t), \quad t>0;\\ \prescript{}{}{I_{0+}^{k-\alpha}}y(t)|_{t=0}=b_{k},\quad k=1,2,\ldots,n,
		\end{cases}
	\end{align}
	where $n-1<\alpha\leq n$ and $A\in \mathbb{R}^{n\times n}$ constant matrix. The problem \eqref{RLfde} was analytically solved in \cite{samko} by successive iteration method. However, with the help of Laplace transform, one can obtain the same solution directly and more quickly. Indeed, taking into account initial conditions and using the inverse Laplace transform, the problem \eqref{RLfde} can be expressed in the form:
	\begin{align*}
		y(t)=\sum_{k=1}^{n}b_{k}t^{\alpha-k}E_{\alpha,\alpha-k+1}(A t^{\alpha})+\int_{0}^{t}(t-\tau)^{\alpha-1}E_{\alpha,\alpha}(A (t-\tau)^{\alpha})h(\tau)\mathrm{d}\tau.
	\end{align*}
	The following results play a necessary role on proofs of main results in Section \ref{stochastic}.
	
	Suppose that $\alpha_1$,$\alpha_2 >1$ and $\frac{1}{\alpha_1}+\frac{1}{\alpha_2}=1$. If $|f(t)|^{\alpha_1}$, $|g(t)|^{\alpha_2}\in L^{1}(\Omega)$, then $|f(t)g(t)|\in L^{1}(\Omega)$ and
	\begin{equation}\label{holder}
		\int_{\Omega}|f(t)g(t)|\mathrm{d}t \leq \left( \int_{\Omega}|f(t)|^{\alpha_1}\mathrm{d}t\right)^{\frac{1}{\alpha_1}} \left(\int_{\Omega}|g(t)|^{\alpha_2}\mathrm{d}t\right)^{\frac{1}{\alpha_2}},
	\end{equation}
	where $L^{1}(\Omega)$ represents the Banach space of all Lebesgue measurable functions $f:\Omega \to \mathbb{R}$ with $\int_{\Omega}|f(t)|\mathrm{d}t < \infty$. Especially, when $\alpha_1=\alpha_2=2$, the inequality \eqref{holder} reduces to the Cauchy-Schwartz  inequality
	\begin{equation}\label{cauchy}
		\left( \int_{\Omega}|f(t)g(t)|\mathrm{d}t \right) ^{2}\leq \int_{\Omega}|f(t)|^{2}\mathrm{d}t  \int_{\Omega}|g(t)|^{2}\mathrm{d}t.
	\end{equation}
	Let $n \in \mathbb{N}$, $q>1$ and $x_{i}\in\mathbb{R}_{+}$, $i=1,2,\ldots,n$. Then, the following inequality holds true
	\begin{equation*}
		\|\sum_{i=1}^{n}x_{i}\|^{q} \leq n^{q-1}\sum_{i=1}^{n}\|x_{i}\|^{q}.
	\end{equation*}
	In particular, when $q=2$, we use the following inequality within the estimations in this paper:
	\begin{equation}\label{ineqq}
		\|\sum_{i=1}^{n}x_{i}\|^{2} \leq n\sum_{i=1}^{n}\|x_{i}\|^{2}.
	\end{equation}
	
	\subsection{Statement of main results}
	Let $\mathbb{R}^{n}$ be endowed with the standard Euclidean norm $\|\cdot\|$. For each $t \geq [0,T]$ let $\Xi_{t} \coloneqq \mathbb{L}^{p}(\Omega, \mathscr{F}_{T},\mathbb{P})$ denote the space of all $\mathscr{F}_{t}$ measurable integrable random variables  with values in $\mathbb{R}^{n}$.
	
	We consider Riemann-Liouville fractional stochastic neutral differential equations of order $\alpha \in (\frac{1}{2},1)$ has the following form on $[0,T]$:
	\begin{equation}\label{fstoc}
		\begin{cases}
			\prescript{}{}D_{0^{+}}^{\alpha}\left( X(t) +g(t,X(t))\right)=A  X(t)+b(t,X(t))+\sigma(t, X(t))\frac{\mathrm{d}W_t}{\mathrm{d}t} ,\\
			\prescript{}{}{I_{0+}^{1-\alpha}}\left( X(t)+g(t,X(t))\right)|_{t=0}=\rho.
		\end{cases}
	\end{equation}
	The coefficients $g,b,\sigma : [0,T] \times \mathbb{R}^{n} \to \mathbb{R}^{n}$ are measurable and bounded functions, $A\in \mathbb{R}^{n\times n}$ and the initial condition $\rho$ in integral form is an $\mathscr{F}_{0}$-measurable. Let $( W_{t})_{t\geq 0} $ denote a standard scalar Brownian motion on a complete probability space $(\Omega,\mathscr{F}, \mathbb{F}, \mathbb{P})$ with filtration $\mathbb{F} \coloneqq \left\lbrace \mathscr{F}_{t}\right\rbrace_{t \geq 0}$.
	
	Let $H$ be the Banach space of all $\mathbb{R}^{n}$-valued  $\mathbb{F}_{t}$-adapted process $\xi$ satisfying the following norm defined by
	
	\begin{align*}
		\| \xi\|^{p}_{H} \coloneqq \sup_{t\geq 0} \textbf{E}\|t^{1-\alpha}\xi(t)\|^{p}< \infty.
	\end{align*}
	
	Let us recall the definition of mild solution to Riemann-Liouville FSNDEs \eqref{fstoc}.
	\begin{definition}
		A stochastic process $ \left\lbrace X(t), t\in [0,T] \right\rbrace$ is called  a mild solution of \eqref{fstoc} if
		\begin{itemize}
			\item $X(t)$ is adapted to $\left\lbrace \mathscr{F}_{t}\right\rbrace _{t \geq 0}$ with
			$ \int_{0}^{t}  \|X(t)\|^{p}_{H}\mathrm{d}t< \infty$ almost everywhere;
			\item $X(t)\in H$ has continuous path on $t\in [0,T]$ a.s. and satisfies the following Volterra integral equation of second kind for each $t\in [0,T]$:
		\end{itemize}	
	\end{definition}	
	\allowdisplaybreaks
	\begin{align}  \label{integral equation}
		X(t)=\frac{t^{\alpha-1}\rho}{\Gamma(\alpha)}-g(t,X(t))&+\frac{1}{\Gamma(\alpha)}\int_{0}^{t}A(t-\tau)^{\alpha-1}g(\tau,X(\tau))\mathrm{d}\tau\nonumber\nonumber\\
		&+\frac{1}{\Gamma(\alpha)}\int_{0}^{t}(t-\tau)^{\alpha-1}b(\tau,X(\tau))\mathrm{d}\tau\nonumber\\
		&+\frac{1}{\Gamma(\alpha)}\int_{0}^{t}(t-\tau)^{\alpha-1}\sigma(\tau,X(\tau))\mathrm{d}W_\tau.
	\end{align}
	Applying variation of constants formula for deterministic Riemann-Liouville fractional differential equations and then adapting this formula for stochastic case, for each $\rho\in \mathbb{R}^{n}$, we obtain
	\begin{align}\label{RLFstoc}
		X(t)&= t^{\alpha-1}E_{\alpha,\alpha}(t^{\alpha}A)\rho - g(t,X(t)) \nonumber\\
		&-\int_{0}^{t} A(t-\tau)^{\alpha-1}E_{\alpha,\alpha}((t-\tau)^{\alpha}A) g(\tau,X(\tau))\mathrm{d}\tau\nonumber \\
		&+\int_{0}^{t} (t-\tau)^{\alpha-1}E_{\alpha,\alpha}((t-\tau)^{\alpha}A)b(\tau,X(\tau))\mathrm{d}\tau \nonumber\\
		&+\int_{0}^{t} (t-\tau)^{\alpha-1}E_{\alpha,\alpha}((t-\tau)^{\alpha}A)\sigma(\tau,X(\tau))\mathrm{d}W(\tau).
	\end{align}
	\begin{theorem}[Coincidence between the notion of integral equation and mild solution]
		The integral equation \eqref{integral equation} and the mild solution \eqref{RLfde} of Riemann-Liouville FSNDEs coincide each other.
	\end{theorem}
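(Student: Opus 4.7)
The plan is to prove both implications by the substitution $Y(t):=X(t)+g(t,X(t))$, which converts \eqref{fstoc} into a linear inhomogeneous Riemann--Liouville fractional SDE for $Y$ with matrix coefficient $A$ and forcing $-Ag(t,X(t))+b(t,X(t))+\sigma(t,X(t))\dot W_t$. Then \eqref{integral equation} becomes the Abel--Volterra integral equation obtained by applying $I_{0+}^{\alpha}$ to both sides and invoking Lemma \ref{RL}(ii) with the initial condition $I_{0+}^{1-\alpha}Y(t)|_{t=0}=\rho$, while \eqref{RLFstoc} is the explicit representation of the unique solution of that Volterra equation in terms of the Mittag--Leffler fundamental kernel. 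Once these two intermediate reformulations are in place, the equivalence reduces to a purely algebraic identity between the Abel kernel and the Mittag--Leffler kernel.

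The computational core of the equivalence is the resolvent identity
$$\frac{t^{\alpha-1}}{\Gamma(\alpha)}I + \int_0^t \frac{A(t-\tau)^{\alpha-1}}{\Gamma(\alpha)}\bigl[\tau^{\alpha-1}E_{\alpha,\alpha}(\tau^{\alpha}A)\bigr]\mathrm d\tau = t^{\alpha-1}E_{\alpha,\alpha}(t^{\alpha}A),$$
which I would prove by inserting the series for $E_{\alpha,\alpha}$ from Definition \ref{Def:bML}, exchanging sum and integral (absolute termwise convergence), and evaluating each convolution by the Beta integral $B(k\alpha+\alpha,\alpha)=\Gamma(k\alpha+\alpha)\Gamma(\alpha)/\Gamma(k\alpha+2\alpha)$. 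Starting from \eqref{integral equation}, one applies this identity componentwise to each of the three forcing pieces (the $\rho t^{\alpha-1}/\Gamma(\alpha)$ term, the $b$-integral and the $\sigma$-Itô-integral); the Abel-kernel Neumann series collapses to the Mittag--Leffler kernel, yielding exactly \eqref{RLFstoc} after subtracting $g(t,X(t))$. The reverse direction substitutes \eqref{RLFstoc} into the right-hand side of \eqref{integral equation} and uses the same resolvent identity to telescope the iterated convolutions back to $X(t)$.

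The main obstacle will be the rigorous exchange of the Itô stochastic integral with the deterministic Riemann--Liouville integrals, which is what actually turns the Neumann-series convolution into the Mittag--Leffler representation on the diffusion term. This requires a stochastic Fubini theorem, whose hypotheses hold because the assumption $\alpha>\tfrac{1}{2}$ guarantees that the singular kernel $(t-\tau)^{\alpha-1}\|E_{\alpha,\alpha}((t-\tau)^{\alpha}A)\|$ is square-integrable on $[0,t]$ (by the entire-function bound on $E_{\alpha,\alpha}$ near $\tau=t$ together with Lemma \ref{Lem21} for large $t-\tau$), and the integrand $\sigma(\cdot,X(\cdot))$ is progressively measurable with $L^{p}$ control on compact intervals since $X\in H$ and $\sigma$ is bounded. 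Once this exchange is justified, the two representations are seen to satisfy the same linear Volterra equation for $Y$, whose solution is unique by the standard contraction/resolvent argument, and hence they coincide.
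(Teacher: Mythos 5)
Your proposal is correct in outline and takes a genuinely different — and far more explicit — route than the paper. The paper's own proof is a single sentence deferring to \cite{arzu}: the coincidence is asserted to follow ``using martingale representation and It\^{o}'s isometry'' as in the Caputo case treated there, so the argument is probabilistic in flavor and is not actually carried out in the text. You instead reduce everything to deterministic kernel algebra: the substitution $Y=X+g(\cdot,X)$, Lemma \ref{RL}(ii) to pass to the Abel--Volterra form, and the resolvent identity
\begin{equation*}
\frac{t^{\alpha-1}}{\Gamma(\alpha)}I+\int_0^t\frac{A(t-\tau)^{\alpha-1}}{\Gamma(\alpha)}\,\tau^{\alpha-1}E_{\alpha,\alpha}(\tau^{\alpha}A)\,\mathrm{d}\tau=t^{\alpha-1}E_{\alpha,\alpha}(t^{\alpha}A),
\end{equation*}
which does check out termwise via $B(\alpha,k\alpha+\alpha)=\Gamma(\alpha)\Gamma(k\alpha+\alpha)/\Gamma(k\alpha+2\alpha)$. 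You also correctly isolate the only genuinely stochastic step — the stochastic Fubini interchange on the diffusion term — and the square-integrability of $(t-\tau)^{\alpha-1}$ for $\alpha>\frac{1}{2}$ is precisely the hypothesis that legitimizes it. What your approach buys is a self-contained, checkable argument uniform in $p\ge 2$; what the paper's citation buys is brevity and reuse of the It\^{o}-isometry machinery already set up in \cite{arzu}.

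One caveat you should make explicit. As printed, \eqref{integral equation} contains the term $\frac{1}{\Gamma(\alpha)}\int_0^tA(t-\tau)^{\alpha-1}g(\tau,X(\tau))\,\mathrm{d}\tau$ and no term $\frac{1}{\Gamma(\alpha)}\int_0^tA(t-\tau)^{\alpha-1}X(\tau)\,\mathrm{d}\tau$; applying $I_{0+}^{\alpha}$ to \eqref{fstoc} and Lemma \ref{RL}(ii) to $Y=X+g(\cdot,X)$ produces the latter, not the former. Your Neumann-series collapse needs the unknown $X$ (equivalently $Y$) under the Abel kernel — with the equation as literally written there is nothing to iterate, and the two displays are simply different formulas rather than equivalent equations, so no proof of the stated coincidence can succeed without emendation. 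Your argument therefore establishes the equivalence between \eqref{RLFstoc} and the \emph{corrected} Volterra equation; state that correction (or flag the typo) before invoking the resolvent identity, and cite a precise stochastic Fubini theorem (e.g.\ from Da Prato--Zabczyk \cite{prato}) for the interchange. With those two additions the proof is complete.
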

	\begin{proof}
		Using martingale representation and It\^{o}'s isometry, this theorem can be proved in a similar way shown in \cite{arzu} by assuming $p\geq 2$.
	\end{proof}
	We impose the following assumptions on \eqref{fstoc}:
	\begin{assumption}\label{A0}
		Let $A \in \mathbb{R}^{n\times n}$ be constant matrix and suppose that $spec(A)\subset \Lambda^{s}_{\alpha}$.
	\end{assumption}
	
	\begin{assumption} \label{A1}
		The coefficients $g, b,\sigma$ satisfy global Lipschitz continuity: there exists $L_{g},L_{b},L_{\sigma}>0$ such that for all $x,y\in \mathbb{R}^{n}$ , $t\geq 0$,
		\begin{align*}
			&\|b(t,x)-b(t,y)\| \leq L_{b}\|x-y\|, \quad \|g(t,x)-g(t,y)\| \leq L_{g}\|x-y\|, \\
			& \|\sigma(t,x)-\sigma(t,y)\| \leq L_{\sigma}\|x-y\|.
		\end{align*}
	\end{assumption}
	\begin{assumption}\label{A11}
		$b(\cdot,0)$ is $\mathbb{L}^{p}$ integrable i.e.
		\begin{equation*}
			\int_{0}^{T} \|b(\tau,0)\|^{p}\mathrm{d}\tau <\infty,
		\end{equation*}
		and $\sigma(\cdot,0)$ is essentially bounded i.e.
		\begin{equation*}
			ess\sup\limits_{\tau\in [0,T]}\|\sigma(\tau,0)\|< \infty.
		\end{equation*}
	\end{assumption}
	\begin{assumption}\label{A3}
		We impose the following condition on measurable functions $g,b,\sigma$ which are used on certain estimations in Section \ref{stochastic}.
		\begin{align*}
			g(t,0)=0, \quad b(t,0)=0, \quad \sigma(t,0)=0.
		\end{align*}
	\end{assumption}
	
	\begin{assumption}\label{A2}
		The following condition will be used in Section \ref{stochastic} to derive asymptotic stability results.
		\begin{align*}
			\Theta &\coloneqq 4^{p-1}\Big( L_{g}^{p}\|A\|^{p}M^{p}\left(B\left(\frac{p\alpha-1}{p-1},\frac{p\alpha-1}{p-1}\right)\right)^{p-1} T^{p\alpha-1}\\
			&+L_{b}^{p}M^{p}\left(B\left(\frac{p\alpha-1}{p-1},\frac{p\alpha-1}{p-1}\right)\right)^{p-1}T^{p\alpha-1}\\
			&+C_{p}L_{\sigma}^{p}M^{p}T^{p(\alpha-1)+\frac{p}{2}}\left( B(2\alpha-1,2\alpha-1)\right)^{p/2}\Big)<1,
		\end{align*}
		where $C_{p}=\left( \frac{p(p-1)}{2}\right)^{p/2}$, $M\coloneqq \sup_{t\in [0,T]}\|E_{\alpha,\alpha}(t^{\alpha}A)\|$ and $B$ is the beta function.
	\end{assumption}

	\begin{definition}\label{stability}
		Let $p\geq 2$ be an integer. The equation \ref{fstoc} is said to be stable in $p$th moment if for arbitrarily given $\epsilon>0$ there exists a $\delta>0$ such that
		\begin{equation}\label{stab}
			\sup_{t\geq 0}\textbf{E}\left(\|X(t)\|^{p}\right) <\epsilon, \quad \text{when}\quad \|\rho\|<\delta.
		\end{equation}
	\end{definition}
	\begin{definition}\label{asymptoticstab}
		Let $p\geq 2$ be an integer. The equation \ref{fstoc} is said to be asymptotically stable in $p$th moment if it is stable in $p$th moment and for any $\rho \in \mathbb{R}^{n}$ it holds
		\begin{equation}\label{astab}
			\lim\limits_{t\to\infty}\sup_{t\geq T}\textbf{E}\left(\|X(t)\|^{p}\right) =0.
		\end{equation}
	\end{definition}
	Now let us state the following well-known lemma (Da Prato and Zabczyk, 1992) which will be used in the sequel of the proofs of the main results.
	\begin{lemma} \label{prato}
		Let $p\geq 2, t>0$ and let $\Phi$ be an $\mathbb{R}^{n}$-valued predictable process such that $\textbf{E}\int\limits_{0}^{t}\|\Phi(\tau)\|^{p}\mathrm{d}\tau < \infty$. Then,
		\begin{align*}
			&\sup_{\tau\in [0,T]}\textbf{E}\norm{\int_{0}^{\tau}\Phi(u)\mathrm{d}W(u)}^{p}\\
			&\leq \left( \frac{p(p-1)}{2}\right)^{p/2}\left(\int_{0}^{t}\left(\textbf{E}\|\Phi(\tau)\|^{p} \right)^{2/p} \mathrm{d}\tau\right)^{p/2} .
		\end{align*}
	\end{lemma}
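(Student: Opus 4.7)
The plan is to apply It\^o's formula to the $p$-th power of the stochastic integral, reducing the claim to a scalar nonlinear integral inequality for $\mathbf{E}\|M_\tau\|^p$, then close the bound by a one-line bootstrap that extracts the constant $\bigl(\tfrac{p(p-1)}{2}\bigr)^{p/2}$.

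Setting $M_\tau \coloneqq \int_0^\tau \Phi(u)\,\mathrm{d}W(u)$, I would apply It\^o's formula to $f(x) = \|x\|^p$ along $M$, using the regularization $f_\varepsilon(x) = (\|x\|^2+\varepsilon)^{p/2}$ to handle the potential singularity of $D^2 f$ at the origin for non-integer $p$ and letting $\varepsilon\downarrow 0$ by dominated convergence at the end. The Hessian is $D^2 f(x) = p\|x\|^{p-2}I + p(p-2)\|x\|^{p-4}xx^{\top}$, and contracting with $\Phi\Phi^{\top}$ together with Cauchy--Schwarz $\langle M_s,\Phi(s)\rangle^{2}\leq\|M_s\|^{2}\|\Phi(s)\|^{2}$ (used in the favourable direction because $p(p-2)\geq 0$ for $p\geq 2$) gives the clean upper bound $\tfrac{1}{2}\Phi(s)^{\top} D^{2} f(M_{s})\Phi(s) \leq \tfrac{p(p-1)}{2}\|M_{s}\|^{p-2}\|\Phi(s)\|^{2}$. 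Localizing by $\tau_{n}=\inf\{t:\|M_{t}\|\geq n\}$ kills the It\^o martingale term upon taking expectations, and passing to the limit yields
\[ \mathbf{E}\|M_{\tau}\|^{p} \leq \frac{p(p-1)}{2}\int_{0}^{\tau}\mathbf{E}\bigl(\|M_{s}\|^{p-2}\|\Phi(s)\|^{2}\bigr)\,\mathrm{d}s. \]

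I then apply H\"older's inequality in $\omega$ with conjugate exponents $p/(p-2)$ and $p/2$ inside the expectation to obtain $\mathbf{E}(\|M_{s}\|^{p-2}\|\Phi(s)\|^{2})\leq (\mathbf{E}\|M_{s}\|^{p})^{(p-2)/p}(\mathbf{E}\|\Phi(s)\|^{p})^{2/p}$. Writing $u(T)\coloneqq\sup_{s\leq T}\mathbf{E}\|M_{s}\|^{p}$ and $V(t)\coloneqq\int_{0}^{t}(\mathbf{E}\|\Phi(s)\|^{p})^{2/p}\,\mathrm{d}s$, pulling the uniform bound $(\mathbf{E}\|M_{s}\|^{p})^{(p-2)/p}\leq u(T)^{(p-2)/p}$ outside the time integral and then taking the supremum of the resulting inequality over $\tau\in[0,T]$ yields the self-referential estimate $u(T)\leq \tfrac{p(p-1)}{2}\,u(T)^{(p-2)/p}\,V(t)$. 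Dividing by $u(T)^{(p-2)/p}$ and raising to the $p/2$-power produces $u(T)\leq\bigl(\tfrac{p(p-1)}{2}\bigr)^{p/2}V(t)^{p/2}$, which is exactly the claim.

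The main obstacle is justifying the division by $u(T)^{(p-2)/p}$ without circular reasoning: one must know a priori that $u(T)<\infty$. I would handle this by running the entire argument on the stopped process $M^{\tau_{n}}$, on which all $p$-th moments are trivially bounded, obtaining the stated bound with a constant independent of $n$; passing to $n\to\infty$ via Fatou's lemma together with the standing hypothesis $\mathbf{E}\int_{0}^{t}\|\Phi(s)\|^{p}\,\mathrm{d}s<\infty$ closes the argument. The remaining ingredients — the choice of H\"older exponents, the sign of the Hessian cross-term, and the bootstrap algebra — are essentially mechanical.
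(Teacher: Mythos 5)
Your proposal is sound, and it is worth noting that the paper offers no proof at all for this lemma: it is simply quoted as a known result of Da Prato and Zabczyk (1992), so you have supplied a complete argument where the paper supplies a citation. The route you take --- It\^o's formula for $f(x)=\|x\|^{p}$ along $M_{\tau}=\int_{0}^{\tau}\Phi\,\mathrm{d}W$, the Hessian contraction bound $\tfrac{1}{2}\Phi^{\top}D^{2}f(M)\Phi\leq\tfrac{p(p-1)}{2}\|M\|^{p-2}\|\Phi\|^{2}$ via Cauchy--Schwarz and $p(p-2)\geq0$, H\"older in $\omega$ with exponents $p/(p-2)$ and $p/2$, and the bootstrap $u\leq Cu^{(p-2)/p}V$ --- is precisely the classical proof of this inequality, and it correctly reproduces the constant $\bigl(\tfrac{p(p-1)}{2}\bigr)^{p/2}$. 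The two points that usually cause trouble are both handled: the a priori finiteness needed to divide by $u(T)^{(p-2)/p}$ is obtained by running the argument on the stopped process $M^{\tau_{n}}$ (for which $\sup_{s}\mathbf{E}\|M_{s\wedge\tau_{n}}\|^{p}\leq n^{p}$ by continuity of paths) and passing to the limit with Fatou, and the regularization $(\|x\|^{2}+\varepsilon)^{p/2}$ covers the smoothness of $f$ at the origin (in fact $f\in C^{2}$ already for $p\geq2$, so this is merely cautious). Two trivial remarks: since the expectation of the local-martingale term must vanish, you should note that $\mathbf{E}\int_{0}^{t}\|\Phi\|^{2}\,\mathrm{d}s<\infty$ follows from the standing $L^{p}$ hypothesis via $\|\Phi\|^{2}\leq1+\|\Phi\|^{p}$; and the mismatch between the supremum over $[0,T]$ and the integral up to $t$ in the statement is an inconsistency of the paper (read $T=t$), not of your argument.
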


	\section{Existence and uniqueness of Riemann-Liouville fractional stochastic neutral differential equations}\label{stochastic}
	In this section, to show global existence and uniqueness results for Riemann-Liouville FSNDEs \eqref{fstoc}, we state and prove the following theorem.
	\begin{theorem}
		Suppose that the Assumptions \ref{A1}-\ref{A3} hold. Then for any $\rho\in \mathbb{R}^{n}$, there exists a unique mild solution of \eqref{fstoc}.
	\end{theorem}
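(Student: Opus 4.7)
The plan is to apply Banach's contraction mapping principle on the space $H$ to the operator $\Psi: H \to H$ defined by the right-hand side of \eqref{RLFstoc}, so that a fixed point of $\Psi$ is, by construction, a mild solution of \eqref{fstoc}. The argument splits naturally into two stages: first showing that $\Psi$ sends $H$ into itself, and then showing that $\Psi$ is a strict contraction under Assumption \ref{A2}.

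For the well-definedness step, I would write $(\Psi X)(t)$ as the sum of the homogeneous term $t^{\alpha-1}E_{\alpha,\alpha}(t^{\alpha}A)\rho$ together with the four perturbation terms (the pointwise $g$, the $A$-convolution of $g$, the drift convolution of $b$, and the It\^o integral of $\sigma$). Multiplying by $t^{1-\alpha}$, raising to the $p$th power and applying the elementary inequality \eqref{ineqq} (in its general form with five summands) separates these contributions. For the three deterministic pieces I would apply H\"older's inequality \eqref{holder} with conjugate exponents $p/(p-1)$ and $p$, which converts the kernel factor $(t-\tau)^{\alpha-1}\tau^{\alpha-1}$ into a beta-function integral with exponent $(p\alpha-1)/(p-1)$, uniformly bounded on $[0,T]$ by $T^{p\alpha-1}B((p\alpha-1)/(p-1),(p\alpha-1)/(p-1))^{p-1}$. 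For the stochastic integral I would invoke Lemma \ref{prato}, which yields a bound involving $C_{p}=(p(p-1)/2)^{p/2}$ and $B(2\alpha-1,2\alpha-1)^{p/2}$. The uniform bound $M$ on $E_{\alpha,\alpha}(t^{\alpha}A)$ on $[0,T]$, combined with the Lipschitz Assumption \ref{A1} and the vanishing condition \ref{A3}, then replaces each coefficient integrand by $L_{g}$, $L_{b}$, or $L_{\sigma}$ times $\|X(\tau)\|$, and the weight $\tau^{1-\alpha}$ inside the $H$-norm absorbs the remaining $\tau^{\alpha-1}$ factors.

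The contraction step is essentially the same computation performed on the difference $(\Psi X - \Psi Y)(t)$. The homogeneous term cancels, and the four remaining contributions are estimated exactly as before but with the Lipschitz bounds on $g(\tau,X)-g(\tau,Y)$, $b(\tau,X)-b(\tau,Y)$, $\sigma(\tau,X)-\sigma(\tau,Y)$. The result is the inequality
\begin{equation*}
\|\Psi X - \Psi Y\|_{H}^{p} \leq \Theta\,\|X - Y\|_{H}^{p},
\end{equation*}
with $\Theta$ exactly as in Assumption \ref{A2} (the prefactor $4^{p-1}$ arising from the four-term version of \eqref{ineqq}). Since $\Theta < 1$, Banach's fixed point theorem produces the required unique mild solution.

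The main technical obstacle is the careful treatment of the stochastic convolution: I need to verify that the integrand $(t-\tau)^{\alpha-1}E_{\alpha,\alpha}((t-\tau)^{\alpha}A)\sigma(\tau,X(\tau))$ is a predictable $\mathbb{L}^{p}$-process so that Lemma \ref{prato} is applicable, and then perform the weighted H\"older estimate in such a way that the singular factors $(t-\tau)^{\alpha-1}$ and $\tau^{\alpha-1}$ combine into an integrable beta kernel of the form $B(2\alpha-1,2\alpha-1)$; this is precisely where the restriction $\alpha>1/2$ becomes indispensable, since otherwise the exponent $2\alpha-1$ would be non-positive and the beta integral would diverge.
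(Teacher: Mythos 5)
Your overall strategy coincides with the paper's: both define $\Psi$ by the right-hand side of \eqref{RLFstoc}, separate the non-homogeneous terms with the power-mean inequality \eqref{ineqq}, estimate the deterministic convolutions by H\"older's inequality and the stochastic convolution by Lemma \ref{prato}, reduce the singular kernels to beta functions, and invoke Banach's fixed point theorem. However, your final contraction inequality is not what this computation produces. The operator contains the pointwise neutral term $-g(t,X(t))$ \emph{outside} all integrals; in the difference $\Psi X-\Psi Y$ it contributes $4^{p-1}L_{g}^{p}\|X-Y\|_{H}^{p}$, and this contribution is \emph{not} among the three summands of $\Theta$ in Assumption \ref{A2} (which collects only the $A$-convolution of $g$, the $b$-convolution and the stochastic convolution). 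So what you actually obtain is $\|\Psi X-\Psi Y\|_{H}^{p}\leq\bigl(4^{p-1}L_{g}^{p}+\Theta\bigr)\|X-Y\|_{H}^{p}$, and the contraction requires the strictly stronger condition $4^{p-1}L_{g}^{p}+\Theta<1$, not merely $\Theta<1$. (The paper instead transposes this term to the left-hand side and ends with $\|\Psi X-\Psi Y\|_{H}^{p}\leq\Theta\left(1-4^{p-1}L_{g}^{p}\right)^{-1}\|X-Y\|_{H}^{p}$, which amounts to the same extra smallness requirement on $L_{g}$; either way, asserting that the constant is ``exactly $\Theta$ as in Assumption \ref{A2}'' silently drops the neutral term.)

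A secondary omission: the paper devotes the bulk of its proof to verifying that $\Psi X$ is continuous in $p$th moment on $[0,T]$, splitting $\Psi X$ into five pieces $\mathcal{I}_{1},\dots,\mathcal{I}_{5}$ and showing $\textbf{E}\|(t_{1}+r)^{1-\alpha}\mathcal{I}_{i}(t_{1}+r)-t_{1}^{1-\alpha}\mathcal{I}_{i}(t_{1})\|^{p}\to 0$ as $r\to 0$, because the definition of a mild solution demands continuous paths and $H$ must be stable under $\Psi$ in that sense. Your well-definedness step only checks finiteness of the $H$-norm of $\Psi X$; you should add this continuity verification (it rests on the strong continuity of $E_{\alpha,\alpha}(t^{\alpha}A)$ together with H\"older and It\^o-isometry estimates on the increments of the convolutions). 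Your remarks on the predictability of the stochastic integrand and on the role of $\alpha>\tfrac{1}{2}$ through the convergence of $B(2\alpha-1,2\alpha-1)$ are correct and match the paper.
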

	\begin{proof}
		To prove the asymptotic stability it is sufficient to show that the operator $\Psi$ has a fixed point in $H$. To show this result, we use the contraction mapping principle.
		
		Associated with the fixed point problem, we define an operator $\Psi: H\to H$ for $t \in [0,T]$ by
		\begin{align}\label{T.oper}
			(\Psi X)(t)&= t^{\alpha-1}E_{\alpha,\alpha}(t^{\alpha}A)\rho - g(t,X(t)) \nonumber\\
			&-\int_{0}^{t} A(t-\tau)^{\alpha-1}E_{\alpha,\alpha}((t-\tau)^{\alpha}A) g(\tau,X(\tau))\mathrm{d}\tau\nonumber \\
			&+\int_{0}^{t} (t-\tau)^{\alpha-1}E_{\alpha,\alpha}((t-\tau)^{\alpha}A)b(\tau,X(\tau))\mathrm{d}\tau \nonumber\\
			&+\int_{0}^{t} (t-\tau)^{\alpha-1}E_{\alpha,\alpha}((t-\tau)^{\alpha}A)\sigma(\tau,X(\tau))\mathrm{d}W(\tau).
		\end{align}
		
		First, we verify mean square continuity of $\Psi$ on $[0,T]$. Let $X\in H$, $t_{1}\geq 0$ and $r$ be sufficiently small and show that
		\begin{align}\label{final}
			&\textbf{E}\|(t_{1}+r)^{1-\alpha}(\Psi X)(t_{1}+r)-t_{1}^{1-\alpha}(\Psi X)(t_{1})\|^{p}\nonumber\\
			\leq& 5^{p-1}\sum_{i=1}^{5}\textbf{E}\|(t_{1}+r)^{1-\alpha}\mathcal{I}_{i}(t_{1}+r)-t_{1}^{\alpha}\mathcal{I}_{i}(t_{1})\|^{p}.
		\end{align}
		Note that
		\begin{align}\label{I1}
			&\textbf{E}\|(t_{1}+r)^{1-\alpha}\mathcal{I}_{1}(t_{1}+r)-t_{1}^{1-\alpha}\mathcal{I}_{1}(t_{1})\|^{p}\nonumber\\
			=&\textbf{E}\|\left[(t_{1}+r)^{1-\alpha}(t_{1}+r)^{\alpha-1} E_{\alpha,\alpha}((t_{1}+r)^{\alpha}A)-t_{1}^{1-\alpha}t_{1}^{\alpha-1}E_{\alpha,\alpha}(t_{1}^{\alpha}A)\right]\rho \|^{p}\nonumber\\
			=&\textbf{E}\|\left[ E_{\alpha,\alpha}((t_{1}+r)^{\alpha}A)-E_{\alpha,\alpha}(t_{1}^{\alpha}A)\right]\rho \|^{p}.
		\end{align}
		The strong continuity of $E_{\alpha,\alpha}(t^{\alpha}A)$ implies that the right-hand-side of \eqref{I1} goes to zero as $r \to 0$.
		
		By Assumptions \ref{A1} and \ref{A3}, we get
		\begin{align}\label{I2}
			&\textbf{E}\|(t_{1}+r)^{1-\alpha}\mathcal{I}_{2}(t_{1}+r)-t_{1}^{1-\alpha}\mathcal{I}_{2}(t_{1})\|^{p}\nonumber\\
			=&\textbf{E}\|(t_{1}+r)^{1-\alpha}g(t_{1}+r,X(t_{1}+r))-t_{1}^{1-\alpha}g(t_{1},X(t_{1}))\|^{p} \nonumber\\
			\leq&2^{p-1}\textbf{E}\|(t_{1}+r)^{1-\alpha}g(t_{1}+r,X(t_{1}+r))-(t_{1}+r)^{1-\alpha}g(t_{1}+r,X(t_{1}))\|^{p}\nonumber\\
			+&2^{p-1}\textbf{E}\|(t_{1}+r)^{1-\alpha}g(t_{1}+r,X(t_{1}))-t_{1}^{1-\alpha}g(t_{1},X(t_{1}))\|^{p}\nonumber\\
			\leq & 2^{p-1}L_{g}^{p}\textbf{E}\|(t_{1}+r)^{1-\alpha}\left( X(t_{1}+r)-X(t_{1})\right)\|^{p} \nonumber\\
			+&2^{p-1}\textbf{E}\|(t_{1}+r)^{1-\alpha}g(t_{1}+r,X(t_{1}))-t_{1}^{1-\alpha}g(t_{1},X(t_{1}))\|^{p}\to 0, \quad \text{as}\quad r \to 0.
		\end{align}
		In view of the H\"{o}lder's inequality and Assumptions  \ref{A1} and \ref{A3}, we have
		
		\allowdisplaybreaks
		{\small
			\begin{align}\label{I3}
				&\textbf{E}\|(t_{1}+r)^{1-\alpha}\mathcal{I}_{3}(t_{1}+r)-t_{1}^{1-\alpha}\mathcal{I}_{3}(t_{1})\|^{p}\nonumber\\
				&=\textbf{E}\|\int_{0}^{t_{1}+r} A(t_{1}+r)^{1-\alpha}(t_{1}+r-\tau)^{\alpha-1}E_{\alpha,\alpha}((t_{1}+r-\tau)^{\alpha}A) g(\tau,X(\tau))\mathrm{d}\tau \nonumber \\
				&-\int_{0}^{t_{1}} At_{1}^{1-\alpha}(t_{1}-\tau)^{\alpha-1}E_{\alpha,\alpha}((t_{1}-\tau)^{\alpha}A) g(\tau,X(\tau))\mathrm{d}\tau\|^{p}\nonumber\\
				&\leq 2^{p-1}\textbf{E}\|\int_{0}^{t_{1}+r} A\Big[(t_{1}+r)^{1-\alpha} (t_{1}+r-\tau)^{\alpha-1}E_{\alpha,\alpha}((t_{1}+r-\tau)^{\alpha}A)\nonumber\\
				&-t_{1}^{1-\alpha}(t_{1}-\tau)^{\alpha-1}E_{\alpha,\alpha}((t_{1}-\tau)^{\alpha}A)\Big] g(\tau,X(\tau))\mathrm{d}\tau\|^{p} \nonumber\\
				&+2^{p-1}\textbf{E}\norm{\int_{t_{1}}^{t_{1}+r} At_{1}^{1-\alpha}(t_{1}-\tau)^{\alpha-1} E_{\alpha,\alpha}((t_{1}-\tau)^{\alpha}A) g(\tau,X(\tau))\mathrm{d}\tau}^{p} \nonumber\\
				&\leq 2^{p-1}\textbf{E}\Big(\int_{0}^{t_{1}+r} \|A\|\Big[ \tau^{\alpha-1} (t_{1}+r)^{1-\alpha} (t_{1}+r-\tau)^{\alpha-1}E_{\alpha,\alpha}((t_{1}+r-\tau)^{\alpha}A)\nonumber\\
				&-\tau^{\alpha-1}t_{1}^{1-\alpha}(t_{1}-\tau)^{\alpha-1}E_{\alpha,\alpha}((t_{1}-\tau)^{\alpha}A)\Big]  \|\tau^{1-\alpha}g(\tau,X(\tau))\|\mathrm{d}\tau\Big)^{p} \nonumber\\
				&+2^{p-1}\textbf{E}\Big( \int_{t_{1}}^{t_{1}+r} \|A\| t_{1}^{1-\alpha}\tau^{\alpha-1}(t_{1}-\tau)^{\alpha-1}\| E_{\alpha,\alpha}((t_{1}-\tau)^{\alpha}A)\| \|\tau^{1-\alpha} g(\tau,X(\tau))\|\mathrm{d}\tau\Big)^{p} \nonumber\\
				&\leq 2^{p-1}L^{p}_{g}\|A\|^{p}\Big( \int_{0}^{t_{1}+r} \|\tau^{\alpha-1} (t_{1}+r)^{1-\alpha} (t_{1}+r-\tau)^{\alpha-1}E_{\alpha,\alpha}((t_{1}+r-\tau)^{\alpha}A)\nonumber\\
				&-\tau^{\alpha-1}t_{1}^{1-\alpha}(t_{1}-\tau)^{\alpha-1}E_{\alpha,\alpha}((t_{1}-\tau)^{\alpha}A)\|^{\frac{p}{p-1}}\mathrm{d}\tau\Big)^{p-1}\int_{0}^{t_{1}+r} \textbf{E} \|\tau^{1-\alpha}X(\tau)\|^{p} \mathrm{d}\tau\nonumber\\
				&+2^{p-1}L^{p}_{g}\|A\|^{p}\Big( \int_{t_{1}}^{t_{1}+r} t_{1}^{\frac{p(1-\alpha)}{p-1}}((t_{1}-\tau)\tau)^{\frac{p(\alpha-1)}{p-1}}\|E_{\alpha,\alpha}((t_{1}-\tau)^{\alpha}A)\|^{\frac{p}{p-1}}\mathrm{d}\tau\Big)^{p-1}\int_{t_{1}}^{t_{1}+r}\textbf{E}\|\tau^{1-\alpha}X(\tau)\|^{p} \mathrm{d}\tau.
			\end{align}
		}
		Thus, the right-hand-side of above inequality tends to zero as $r \to 0$.
		
		Next, we consider
		\allowdisplaybreaks
		{\small
			\begin{align}\label{I4}
				&\textbf{E}\|(t_{1}+r)^{1-\alpha}\mathcal{I}_{4}(t_{1}+r)-t_{1}^{1-\alpha}\mathcal{I}_{4}(t_{1})\|^{p}\nonumber\\
				&=\textbf{E}\|\int_{0}^{t_{1}+r} (t_{1}+r)^{1-\alpha} (t_{1}+r-\tau)^{\alpha-1}E_{\alpha,\alpha}((t_{1}+r-\tau)^{\alpha}A) b(\tau,X(\tau))\mathrm{d}\tau \nonumber\\
				&-\int_{0}^{t_{1}} t_{1}^{\alpha} (t_{1}-\tau)^{\alpha-1}E_{\alpha,\alpha}((t_{1}-\tau)^{\alpha}A) b(\tau,X(\tau))\mathrm{d}\tau\|^{p}\nonumber\\
				&\leq 2^{p-1}\textbf{E}\|\int_{0}^{t_{1}+r} \Big[(t_{1}+r)^{1-\alpha} (t_{1}+r-\tau)^{\alpha-1}E_{\alpha,\alpha}((t_{1}+r-\tau)^{\alpha}A)\nonumber\\
				&-t_{1}^{1-\alpha}(t_{1}-\tau)^{\alpha-1}E_{\alpha,\alpha}((t_{1}-\tau)^{\alpha}A)\Big] b(\tau,X(\tau))\mathrm{d}\tau\|^{p} \nonumber\\
				&+2^{p-1}\textbf{E}\norm{\int_{t_{1}}^{t_{1}+r} t_{1}^{1-\alpha}(t_{1}-\tau)^{\alpha-1} E_{\alpha,\alpha}((t_{1}-\tau)^{\alpha}A) b(\tau,X(\tau))\mathrm{d}\tau}^{p} \nonumber\\
				&\leq 2^{p-1}\textbf{E}\Big( \int_{0}^{t_{1}+r}\|\tau^{\alpha-1} (t_{1}+r)^{1-\alpha} (t_{1}+r-\tau)^{\alpha-1}E_{\alpha,\alpha}((t_{1}+r-\tau)^{\alpha}A)\nonumber\\
				&-\tau^{\alpha-1}t_{1}^{1-\alpha}(t_{1}-\tau)^{\alpha-1}E_{\alpha,\alpha}((t_{1}-\tau)^{\alpha}A)\| \|\tau^{1-\alpha}b(\tau,X(\tau))\|\mathrm{d}\tau\Big)^{p} \nonumber\\
				&+2^{p-1}\textbf{E}\left( \int_{t_{1}}^{t_{1}+r} \tau^{\alpha-1}(t_{1}-\tau)^{\alpha-1}\| E_{\alpha,\alpha}((t_{1}-\tau)^{\alpha}A)\| \|\tau^{1-\alpha} b(\tau,X(\tau))\|\mathrm{d}\tau\right)^{p} \nonumber\\
				&\leq 2^{p-1}L^{p}_{g}\Big(\int_{0}^{t_{1}+r} \|\tau^{\alpha-1} (t_{1}+r)^{1-\alpha} (t_{1}+r-\tau)^{\alpha-1}E_{\alpha,\alpha}((t_{1}+r-\tau)^{\alpha}A)\nonumber\\
				&-\tau^{\alpha-1}t_{1}^{1-\alpha}(t_{1}-\tau)^{\alpha-1}E_{\alpha,\alpha}((t_{1}-\tau)^{\alpha}A)\|^{\frac{p}{p-1}}\mathrm{d}\tau\Big)^{p-1} \int_{0}^{t_{1}+r} \textbf{E} \|\tau^{1-\alpha}X(\tau)\|^{p} \mathrm{d}\tau\nonumber\\
				&+2^{p-1}L^{p}_{g}\left( \int_{t_{1}}^{t_{1}+r} ((t_{1}-\tau)\tau)^{\frac{p(\alpha-1)}{p-1}}\| E_{\alpha,\alpha}((t_{1}-\tau)^{\alpha}A)\|^{\frac{p}{p-1}}\mathrm{d}\tau\right)^{p-1}\int_{t_{1}}^{t_{1}+r}\textbf{E}\|\tau^{1-\alpha}X(\tau)\|^{p} \mathrm{d}\tau.
			\end{align}
		}
		Therefore, the right-hand-side  of the above inequality tends to zero as $r\to 0$ and $\epsilon$ is sufficiently small.
		
		Further, we have
		\allowdisplaybreaks
		{\small
			\begin{align}\label{I5}
				&\textbf{E}\|(t_{1}+r)^{1-\alpha}\mathcal{I}_{5}(t_{1}+r)-t_{1}^{1-\alpha}\mathcal{I}_{5}(t_{1})\|^{p}\nonumber\\
				&=\textbf{E}\|\int_{0}^{t_{1}+r}(t_{1}+r)^{1-\alpha} (t_{1}+r-\tau)^{\alpha-1}E_{\alpha,\alpha}((t_{1}+r-\tau)^{\alpha}A) \sigma(\tau,X(\tau))\mathrm{d}W(\tau)\nonumber\\
				&-\int_{0}^{t_{1}}t_{1}^{1-\alpha} (t_{1}-\tau)^{\alpha-1}E_{\alpha,\alpha}((t_{1}-\tau)^{\alpha}A) \sigma(\tau,X(\tau))\mathrm{d}W(\tau)\|^{p}\nonumber\\
				&\leq 2^{p-1}\textbf{E}\|\int_{0}^{t_{1}+r} \Big[(t_{1}+r)^{1-\alpha} (t_{1}+r-\tau)^{\alpha-1}E_{\alpha,\alpha}((t_{1}+r-\tau)^{\alpha}A)\nonumber\\
				&-t_{1}^{1-\alpha}(t_{1}-\tau)^{\alpha-1}E_{\alpha,\alpha}((t_{1}-\tau)^{\alpha}A)\Big] \sigma(\tau,X(\tau))\mathrm{d}W(\tau) \|^{p} \nonumber\\
				&+2^{p-1}\textbf{E}\norm{\int_{t_{1}}^{t_{1}+r} t_{1}^{1-\alpha}(t_{1}-\tau)^{\alpha-1} E_{\alpha,\alpha}((t_{1}-\tau)^{\alpha}A) \sigma(\tau,X(\tau))\mathrm{d}W(\tau)}^{p} \nonumber\\
				&\leq 2^{p-1}C_{p}\textbf{E}\Big( \int_{0}^{t_{1}+r} \Big[\tau^{\alpha-1}(t_{1}+r)^{1-\alpha} (t_{1}+r-\tau)^{\alpha-1}E_{\alpha,\alpha}((t_{1}+r-\tau)^{\alpha}A)\nonumber\\
				&-\tau^{\alpha-1}t_{1}^{1-\alpha}(t_{1}-\tau)^{\alpha-1}E_{\alpha,\alpha}((t_{1}-\tau)^{\alpha}A)\Big]^{2} \|\tau^{1-\alpha}\sigma(\tau,X(\tau))\|^{2}\mathrm{d}\tau\Big) ^{p/2} \nonumber\\
				&+2^{p-1}C_{p}\textbf{E}\left( \int_{t_{1}}^{t_{1}+r} \tau^{2\alpha-2}(t_{1}-\tau)^{2\alpha-2} \|E_{\alpha,\alpha}((t_{1}-\tau)^{\alpha}A)\|^{2} \|\tau^{1-\alpha}\sigma(\tau,X(\tau))\|^{2}\mathrm{d}\tau \right) ^{p/2} \nonumber\\
				&\leq 2^{p-1}C_{p}L^{p}_{\sigma}\Big( \int_{0}^{t_{1}+r} \Big[\tau^{\alpha-1}(t_{1}+r)^{1-\alpha} (t_{1}+r-\tau)^{\alpha-1}E_{\alpha,\alpha}((t_{1}+r-\tau)^{\alpha}A)\nonumber\\
				&-\tau^{\alpha-1}t_{1}^{1-\alpha}(t_{1}-\tau)^{\alpha-1}E_{\alpha,\alpha}((t_{1}-\tau)^{\alpha}A)\Big]^{2} \textbf{E}\|\tau^{1-\alpha}X(\tau)\|^{2}\mathrm{d}\tau\Big)^{p/2} \nonumber\\
				&+2^{p-1}C_{p}L^{p}_{\sigma}\left( \int_{t_{1}}^{t_{1}+r}\tau^{2\alpha-2} (t_{1}-\tau)^{2\alpha-2} \|E_{\alpha,\alpha}((t_{1}-\tau)^{\alpha}A)\|^{2}\textbf{E} \|\tau^{1-\alpha}X(\tau)\|^{2}\mathrm{d}\tau \right)^{p/2}\nonumber\\
				&\leq 2^{p-1}C_{p}L^{p}_{\sigma}\Big(\int_{0}^{t_{1}+r} \Big[\tau^{\alpha-1}(t_{1}+r)^{1-\alpha} (t_{1}+r-\tau)^{\alpha-1}E_{\alpha,\alpha}((t_{1}+r-\tau)^{\alpha}A)\nonumber\\
				&-\tau^{\alpha-1}t_{1}^{1-\alpha}(t_{1}-\tau)^{\alpha-1}E_{\alpha,\alpha}((t_{1}-\tau)^{\alpha}A)\Big]^{\frac{2p}{p-2}} \mathrm{d}\tau\Big)^{\frac{p-2}{2}} \int_{0}^{t_{1}+r}\textbf{E}\|\tau^{1-\alpha}X(\tau)\|^{p}\mathrm{d}\tau\nonumber\\
				&+2^{p-1}C_{p}L^{p}_{\sigma} \Big( \int_{t_{1}}^{t_{1}+r}\left(  (t_{1}-\tau)^{2\alpha-2} \|E_{\alpha,\alpha}((t_{1}-\tau)^{\alpha}A)\|^{2}\right)^{\frac{p}{p-2}} \mathrm{d}\tau\Big)^{\frac{p-2}{2}}
				\int_{t_{1}}^{t_{1}+r}\textbf{E}\|\tau^{1-\alpha}X(\tau)\|^{p}\mathrm{d}\tau.
			\end{align}
		}
		As $r\to 0$ the right-hand-side of the above inequality tends to zero. Thus, by taking into \eqref{I1}-\eqref{I5} account $\Psi$ is continuous in $p$th moment on $[0,T]$.

		Next, to prove the global existence and uniqueness of solutions, we will show that the operator $\Psi$ has a unique fixed point. Indeed for any $X,Y\in H$, we have
		\allowdisplaybreaks
		\begin{align*}
			&\sup_{t\in[0,T]}\textbf{E}\|t^{1-\alpha}(\Psi X)(t)-t^{1-\alpha}(\Psi Y)(t)\|^{p}\\
			&\leq 4^{p-1}\sup_{t\in[0,T]} \textbf{E} \| t^{1-\alpha}g(t,(\Psi X)(t))-t^{1-\alpha}g(t,(\Psi Y)(t))\|^{p}\\
			&+4^{p-1}\sup_{t\in[0,T]}\textbf{E}\|\int_{0}^{t}At^{1-\alpha}(t-\tau)^{\alpha-1}E_{\alpha,\alpha}((t-\tau)^{\alpha}A)[g(\tau,X(\tau))-g(\tau,Y(\tau))]\mathrm{d}\tau\|^{p} \\
			&+4^{p-1}\sup_{t\in[0,T]}\textbf{E}\|\int_{0}^{t}t^{1-\alpha}(t-\tau)^{\alpha-1}E_{\alpha,\alpha}((t-\tau)^{\alpha}A)[b(\tau,X(\tau))-b(\tau,Y(\tau))]\mathrm{d}\tau\|^{p} \\
			&+ 4^{p-1}\sup_{t\in[0,T]}\textbf{E} \|\int_{0}^{t}t^{1-\alpha}(t-\tau)^{\alpha-1}E_{\alpha,\alpha}((t-\tau)^{\alpha}A)[\sigma(\tau,X(\tau))-\sigma(\tau,Y(\tau))]\mathrm{d}W(\tau)\|^{p} \\
			&\leq 4^{p-1}L_{g}^{p}\sup_{t\in[0,T]}\textbf{E}\|t^{1-\alpha}(\Psi X)(t)-t^{1-\alpha}(\Psi Y)(t)\|^{p}\\
			&+ 4^{p-1}L_{g}^{p}\|A\|^{p}\Big(\sup_{t\in [0,T]}\int_{0}^{t}\left(t^{1-\alpha} (t-\tau)^{\alpha-1}\tau^{\alpha-1}\|E_{\alpha,\alpha}((t-\tau)^{\alpha}A)\|\right) ^{\frac{p}{p-1}}\mathrm{d}\tau\Big)^{p-1}\\
			&\times\sup_{t\in [0,T]}\textbf{E}\|t^{1-\alpha}X(t)-t^{1-\alpha}Y(t)\|^{p}\\
			&+4^{p-1}L_{b}^{p}\Big(\sup_{t\in [0,T]}\int_{0}^{t}\left(t^{1-\alpha} (t-\tau)^{\alpha-1}\tau^{\alpha-1}\|E_{\alpha,\alpha}((t-\tau)^{\alpha}A)\|\right)^{\frac{p}{p-1}}\mathrm{d}\tau\Big)^{p-1}\\
			&\times \sup_{t\in [0,T]}\textbf{E}\|t^{1-\alpha}X(t)-t^{1-\alpha}Y(t)\|^{p}\\
			&+4^{p-1}C_{p}\sup_{t\in [0,T]}\textbf{E}\Big[ \int_{0}^{t}\|t^{1-\alpha}(t-\tau)^{\alpha-1}\tau^{\alpha-1}E_{\alpha,\alpha}((t-\tau)^{\alpha}A)[\tau^{1-\alpha}\sigma(\tau,X(\tau))-\tau^{1-\alpha}\sigma(\tau,Y(\tau))]\|_{X}^{2}\mathrm{d} \tau\Big]^{\frac{p}{2}}\\
			&\leq 4^{p-1}L_{g}^{p}\|\Psi X-\Psi Y\|_{H}^{p}\\
			&+4^{p-1}L_{g}^{p}\|A\|^{p}M^{p}\Big(\sup_{t\in [0,T]}\int_{0}^{t}\left(t^{1-\alpha} (t-\tau)^{\alpha-1}\tau^{\alpha-1}\|\right) ^{\frac{p}{p-1}}\mathrm{d}\tau\Big)^{p-1}\|X-Y\|_{H}^{p}\\
			&+4^{p-1}L_{b}^{p}M^{p}\Big(\sup_{t\in [0,T]}\int_{0}^{t}\left(t^{1-\alpha} (t-\tau)^{\alpha-1}\tau^{\alpha-1}\right) ^{\frac{p}{p-1}}\mathrm{d}\tau\Big)^{p-1}\|X-Y\|_{H}^{p}\\
			&+4^{p-1}C_{p}L_{\sigma}^{p}M^{p}\Big(\sup_{t\in [0,T]}\int_{0}^{t}t^{2-2\alpha} (t-\tau)^{2\alpha-2}\tau^{2\alpha-2}\mathrm{d}\tau\Big)^{p/2}\|X-Y\|_{H}^{p}\\
			&\leq 4^{p-1}L_{g}^{p}\|\Psi X-\Psi Y\|_{H}^{p}+4^{p-1}L_{g}^{p}\|A\|^{p}M^{p}\left(B\left(\frac{p\alpha-1}{p-1},\frac{p\alpha-1}{p-1}\right)\right)^{p-1} T^{p\alpha-1}\|X-Y\|_{H}^{p}\\
			&+4^{p-1}L_{b}^{p}M^{p}\left(B\left(\frac{p\alpha-1}{p-1},\frac{p\alpha-1}{p-1}\right)\right)^{p-1} T^{p\alpha-1}\|X-Y\|_{H}^{p}\\
			&+4^{p-1}C_{p}L_{\sigma}^{p}M^{p}T^{p(\alpha-1)+\frac{p}{2}}\left( B(2\alpha-1,2\alpha-1)\right)^{p/2}\|X-Y\|_{H}^{p}.
		\end{align*}
		Thus,
		\begin{align*}
			\|\Psi X-\Psi Y\|_{H}^{p}\leq \frac{\Theta}{1-4^{p-1}L^{p}_{g}}\|X-Y\|_{H}^{p},
		\end{align*}
		where
		\begin{align*}
			\Theta&\coloneqq 4^{p-1}\Big[ L_{g}^{p}\|A\|^{p}M^{p}\left(B\left(\frac{p\alpha-1}{p-1},\frac{p\alpha-1}{p-1}\right)\right)^{p-1} T^{p\alpha-1}\\
			&+L_{b}^{p}M^{p}\left(B\left(\frac{p\alpha-1}{p-1},\frac{p\alpha-1}{p-1}\right)\right)^{p-1}T^{p\alpha-1}\\
			&+C_{p}L_{\sigma}^{p}M^{p}T^{p(\alpha-1)+\frac{p}{2}}\left( B(2\alpha-1,2\alpha-1)\right)^{p/2}\Big].
		\end{align*}.
		
		Therefore, by Assumption \ref{A3}, $\Psi$ is a contraction mapping and hence there exists a unique fixed point, which is a mild solution of \eqref{fstoc} on $[0,T]$.
	\end{proof}
	\section{Asymptotic stability analysis of Riemann-Liouville fractional neutral differential equations}\label{asymptotic}
	We assume that   {$\mathscr{D}$ is a space of all $\mathbb{R}^{n}$-valued $\mathbb{F}_{t}$-adapted process $\varphi$ satisfying the following norm defined by
		\begin{equation*}
			\|\varphi\|_{\mathscr{D}}^{p}=\sup_{t\geq 0}\textbf{E}\|\varphi(t)\|^{p}.
		\end{equation*}
		where $\varphi$ is continuous in $t$ such that $\textbf{E}\|\varphi(t)\|^{p}\to 0$ as $t\to 0$.
		It is clear that $\mathscr{D}$ is a Banach space with respect to the norm $\|\cdot \|_{\mathscr{D}}$.
		\begin{theorem}\label{stabilitytheorem}
			Let $p\geq 2$ be an integer and suppose that Assumptions \ref{A0}- \ref{A3} hold. Then, the mild solution of \eqref{fstoc} is asymptotically stable in the $p$th moment.
		\end{theorem}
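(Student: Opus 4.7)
The plan is to apply Banach's contraction mapping principle to the operator $\Psi$ from \eqref{T.oper}, now acting on the Banach space $\mathscr{D}$, where the decay estimates of Lemma \ref{Lem21} (valid under Assumption \ref{A0}) replace the role that the finite horizon $T$ played in the existence argument. First I would split $(\Psi X)(t)$ into the same five summands $\mathcal{I}_{1},\dots,\mathcal{I}_{5}$ as before and verify $\Psi(\mathscr{D}) \subset \mathscr{D}$, that is, each $\textbf{E}\|\mathcal{I}_{i}(t)\|^{p}$ is uniformly bounded in $t \geq 0$ and vanishes as $t \to \infty$. For $\mathcal{I}_{1}(t) = t^{\alpha-1}E_{\alpha,\alpha}(t^{\alpha}A)\rho$ the first bullet of Lemma \ref{Lem21} gives $\|\mathcal{I}_{1}(t)\| \leq \tilde{M}\|\rho\|/t^{\alpha+1}$ for $t \geq t_{0}$; for $\mathcal{I}_{2}(t) = -g(t,X(t))$ Assumptions \ref{A1} and \ref{A3} yield $\|\mathcal{I}_{2}(t)\| \leq L_{g}\|X(t)\|$, so decay of $X \in \mathscr{D}$ transfers directly.

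For the three convolution terms $\mathcal{I}_{3}, \mathcal{I}_{4}, \mathcal{I}_{5}$ I would split $\int_{0}^{t} = \int_{0}^{t/2} + \int_{t/2}^{t}$. On $[t/2,t]$ the integrand inherits smallness from $X(\tau)$ at large $\tau$, while the kernel is integrable by the second bullet of Lemma \ref{Lem21}. On $[0,t/2]$ the kernel $(t-\tau)^{\alpha-1}\|E_{\alpha,\alpha}((t-\tau)^{\alpha}A)\|$ is uniformly small for large $t$ by the same lemma, while $X(\tau)$ remains uniformly bounded in $p$th moment. The two deterministic convolutions are handled by H\"{o}lder's inequality \eqref{holder}, whereas the stochastic convolution $\mathcal{I}_{5}$ requires Lemma \ref{prato} followed by an additional H\"{o}lder step to convert the $L^{2}$-integrand inside the Burkholder bound into the $L^{p}$-moments used in $\|\cdot\|_{\mathscr{D}}$.

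Next I would establish the contraction. For $X, Y \in \mathscr{D}$, exploiting Assumption \ref{A1} together with the uniform bound
\begin{equation*}
\sup_{t \geq 0} t^{1-\alpha}\int_{0}^{t}(t-\tau)^{\alpha-1}\|E_{\alpha,\alpha}((t-\tau)^{\alpha}A)\|\tau^{\alpha-1}\,\mathrm{d}\tau < \infty
\end{equation*}
from Lemma \ref{Lem21}, one derives $\|\Psi X - \Psi Y\|_{\mathscr{D}}^{p} \leq \Theta^{\ast}\|X - Y\|_{\mathscr{D}}^{p}$ for a constant $\Theta^{\ast}$ analogous to the $\Theta$ of Assumption \ref{A2} but with each $T$-dependent factor replaced by the uniform integral bound above (and by its $L^{2}$-counterpart for the diffusion term). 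Assumption \ref{A2} (or its natural time-independent reformulation) then forces $\Theta^{\ast} < 1 - 4^{p-1}L_{g}^{p}$, so $\Psi$ is a strict contraction on $\mathscr{D}$.

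Banach's fixed point theorem therefore produces a unique $X^{\ast} \in \mathscr{D}$ with $\Psi X^{\ast} = X^{\ast}$, which is the mild solution of \eqref{fstoc}. Membership in $\mathscr{D}$ gives $\lim_{t \to \infty}\textbf{E}\|X^{\ast}(t)\|^{p} = 0$, while the Lipschitz dependence of $X^{\ast}$ on $\rho$ read off the same contraction estimate yields the $\varepsilon$--$\delta$ statement of Definition \ref{stability}; together these are exactly Definition \ref{asymptoticstab}. The main obstacle will be the stochastic convolution $\mathcal{I}_{5}$: after squaring the decay estimate $t^{\alpha-1}\|E_{\alpha,\alpha}(t^{\alpha}A)\| \lesssim t^{-\alpha-1}$ and integrating against the $\tau^{2(\alpha-1)}$ weight, one must still recover a finite, uniformly bounded, and $t$-vanishing quantity after the extra H\"{o}lder step, verifying that no prefactor depending on $t$ either blows up or inflates the contraction constant beyond $1$.
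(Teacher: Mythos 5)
Your proposal is correct and its backbone --- showing $\Psi(\mathscr{D})\subset\mathscr{D}$ term by term using Lemma \ref{Lem21} under Assumption \ref{A0}, then extracting the mild solution as a fixed point in $\mathscr{D}$ --- is the same as the paper's. Two points genuinely differ. First, to prove that the convolution terms vanish as $t\to\infty$ you split $\int_0^t=\int_0^{t/2}+\int_{t/2}^t$, letting the kernel decay handle $[0,t/2]$ and the decay of $X$ handle $[t/2,t]$; the paper instead writes a H\"{o}lder bound of the form $\bigl(\int_0^t(\tau^{\alpha-1}\|E_{\alpha,\alpha}(\tau^\alpha A)\|)^{p/(p-1)}\mathrm{d}\tau\bigr)^{p-1}\int_0^t\textbf{E}\|X(\tau)\|^p\mathrm{d}\tau$ and asserts it tends to zero. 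Your splitting is the more robust way to make that limit rigorous, since the product of a bounded factor and a nondecreasing integral does not obviously vanish. Second, for the $\varepsilon$--$\delta$ stability part you read off Lipschitz dependence of the fixed point on $\rho$ from the contraction estimate; the paper instead runs a first-crossing-time contradiction: it chooses $\delta$ so that an explicit combination of the constants times $\epsilon$ plus the $\rho$-term stays below $\epsilon$, supposes a first time $\hat t$ with $\textbf{E}\|X(\hat t)\|^p=\epsilon$, and derives a contradiction. Both finishes work; yours is shorter once the contraction on $\mathscr{D}$ is in hand, while the paper's avoids restating that contraction and instead reuses the Section \ref{stochastic} fixed point. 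One caveat you already flag honestly: Assumption \ref{A2} as stated is $T$-dependent, so the ``time-independent reformulation'' you need to make $\Theta^{\ast}<1-4^{p-1}L_g^p$ on all of $[0,\infty)$ is an additional hypothesis rather than a consequence of the stated assumptions --- the paper has the same gap but conceals it by not redoing the contraction on $\mathscr{D}$.
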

		\begin{proof}
			First, we show that $\Psi(\mathscr{D})\subset \mathscr{D}$. Let $X\in \mathscr{D}$. We have
			\begin{align}\label{PsiB}
				\textbf{E}\|(\Psi X)(t)\|^{p}&\leq 6^{p-1}\textbf{E}\|t^{\alpha-1}E_{\alpha,\alpha}(t^{\alpha}A)\rho\|^{p} +6^{p-1} \textbf{E}\|g(t,X(t))\|^{p} \nonumber\\
				&+6^{p-1}\textbf{E}\norm{\int_{0}^{t} A(t-\tau)^{\alpha-1}E_{\alpha,\alpha}((t-\tau)^{\alpha}A) g(\tau,X(\tau))\mathrm{d}\tau}^{p}\nonumber \\
				&+6^{p-1}\textbf{E}\norm{\int_{0}^{t} (t-\tau)^{\alpha-1}E_{\alpha,\alpha}((t-\tau)^{\alpha}A)b(\tau,X(\tau))\mathrm{d}\tau}^{p} \nonumber\\
				&+6^{p-1}\textbf{E}\norm{\int_{0}^{t} (t-\tau)^{\alpha-1}E_{\alpha,\alpha}((t-\tau)^{\alpha}A)\sigma(\tau,X(\tau))\mathrm{d}W(\tau)}^{p}.
			\end{align}
			Now we estimate the terms on the right-hand-side of \eqref{PsiB}. By Assumption \ref{A0} and according to the Lemma \ref{Lem21} (i), first we have
			\begin{align*}
				&6^{p-1}\textbf{E}\|t^{\alpha-1}E_{\alpha,\alpha}(t^{\alpha}A)\rho\|^{p}\to 0, \quad \text{as}\quad  t\to\infty .
			\end{align*}
			For $X(t)\in \mathscr{D}$ and for any $\epsilon>0$, there exists a $t_{1}>0$ such that $\textbf{E}\|X(t)\|^{p} \leq \epsilon$ for $t\geq t_{1}$. Therefore,
			\begin{align*}
				6^{p-1} \textbf{E}\|g(t,X(t))\|^{p}&=6^{p-1} \textbf{E}\| g(t,X(t))-g(t,0)+g(t,0) \|^{p}\\
				&\leq 6^{p-1} \textbf{E}\|g(t,X(t))-g(t,0)\|^{p}\\
				&\leq 6^{p-1}L^{p}_{g} \textbf{E}\|X(t)\|^{p}\to 0, \quad \text{as} \quad t\to\infty.
			\end{align*}
			For the fourth term, by Assumptions \ref{A1} and \ref{A3}:
			\begin{align*}
				&6^{p-1}\textbf{E}\norm{\int_{0}^{t} A(t-\tau)^{\alpha-1}E_{\alpha,\alpha}((t-\tau)^{\alpha}A) g(\tau,X(\tau))\mathrm{d}\tau}^{p}\\
				&\leq 6^{p-1}\textbf{E}\left( \int_{0}^{t} \|A\|(t-\tau)^{\alpha-1}\|E_{\alpha,\alpha}((t-\tau)^{\alpha}A)\| \|g(\tau,X(\tau))\|\mathrm{d}\tau\right) ^{p}\\
				&\leq 6^{p-1}L_{g}^{p}\|A\|^{p}\norm{\int_{0}^{t}\left(  (t-\tau)^{\alpha-1}\|E_{\alpha,\alpha}((t-\tau)^{\alpha}A)\|\right)  ^{\frac{p}{p-1}} \mathrm{d}\tau}^{p-1}\int_{0}^{t}\textbf{E}\|X(\tau)\|^{p}\mathrm{d}\tau \\
				&\leq 6^{p-1}L_{g}^{p}\|A\|^{p}\left( \int_{0}^{t}  \left( \tau^{\alpha-1}\|E_{\alpha,\alpha}(\tau^{\alpha}A)\|\right) ^{\frac{p}{p-1}} \mathrm{d}\tau\right) ^{p-1} \int_{0}^{t}\textbf{E}\|X(\tau)\|^{p}\mathrm{d}\tau \to 0, \quad \text{as} \quad  t\to\infty.
			\end{align*}
			Similarly, applying H\"{o}lder inequality, Assumptions \ref{A1} and \ref{A3}, we also have
			\begin{align*}
				&6^{p-1}\textbf{E}\norm{\int_{0}^{t} (t-\tau)^{\alpha-1}E_{\alpha,\alpha}((t-\tau)^{\alpha}A) b(\tau,X(\tau))\mathrm{d}\tau}^{p}\\
				&6^{p-1}\textbf{E}\left( \int_{0}^{t} (t-\tau)^{\alpha-1}\|E_{\alpha,\alpha}((t-\tau)^{\alpha}A)\| \|b(\tau,X(\tau))\|\mathrm{d}\tau\right)^{p}\\
				&\leq 6^{p-1}L_{b}^{p}\left( \int_{0}^{t}  \left( \tau^{\alpha-1}\|E_{\alpha,\alpha}(\tau^{\alpha}A)\|\right) ^{\frac{p}{p-1}} \mathrm{d}\tau\right) ^{p-1} \int_{0}^{t}\textbf{E}\|X(\tau)\|^{p}\mathrm{d}\tau \to 0, \quad \text{as} \quad  t\to\infty.
			\end{align*}
			For the sixth term of \eqref{PsiB}, we arrive at
			\begin{align*}
				&6^{p-1}\textbf{E}\norm{\int_{0}^{t} (t-\tau)^{\alpha-1}E_{\alpha,\alpha}((t-\tau)^{\alpha}A) \sigma(\tau,X(\tau))\mathrm{d}W(\tau)}^{p}\\
				&\leq 6^{p-1}C_{p}\textbf{E}\left( \int_{0}^{t} (t-\tau)^{2\alpha-2}\|E_{\alpha,\alpha}((t-\tau)^{\alpha}A)\|^{2}\|\sigma(\tau,X(\tau))\|^{2}\mathrm{d}\tau\right) ^{p/2}\\
				&\leq 6^{p-1}C_{p}L_{\sigma}^{p}\left( \int_{0}^{t}\left[  (t-\tau)^{2\alpha-2}\|E_{\alpha,\alpha}((t-\tau)^{\alpha}A)\|^{2}\right]^{\frac{p}{p-2}} \mathrm{d}\tau\right)^{\frac{p-2}{2}}\int_{0}^{t}\textbf{E}\|X(\tau)\|^{p}\mathrm{d}\tau\\
				&\leq 6^{p-1}C_{p}L_{\sigma}^{p}\left( \int_{0}^{t}\left[  \tau^{2\alpha-2}\|E_{\alpha,\alpha}(\tau^{\alpha}A)\|^{2}\right]^{\frac{p}{p-2}} \mathrm{d}\tau\right)^{\frac{p-2}{2}}\int_{0}^{t}\textbf{E}\|X(\tau)\|^{p}\mathrm{d}\tau \to 0, \quad \text{as} \quad  t\to\infty.
			\end{align*}

			Now we are in a position to prove asymptotic stability of the mild solution of \eqref{fstoc}. To do so, as the first step, we have to prove the stability in $p$th moment. Let $\epsilon>0$ be given and choose $\delta>0$ such that $\delta<\epsilon$ satisfies
			
			\begin{align*}
				6^{p-1}M^{p}T^{p(\alpha-1)}\delta&+6^{p-1}\Big[L_{g}^{p}+ L^{p}_{g}\|A\|^{p}M^{p}\left(\frac{p-1}{p\alpha-1}\right)^{p-1} T^{p\alpha-1}\\
				&+L_{b}^{p}M^{p}\left(\frac{p-1}{p\alpha-1}\right)^{p-1} T^{p\alpha-1}+C_{p}L_{\sigma}^{p}M^{p}\left( \frac{T^{2\alpha-1}}{2\alpha-1}\right)^{p/2}\Big]\epsilon <\epsilon
			\end{align*}
			If $X(t)=X(t,\rho)$ is mild solution of \eqref{fstoc} with $\|\rho\|^{p}< \delta$, then $(\Psi X)(t)=X(t)$ satisfies $\textbf{E}\|X(t)\|^{p}< \epsilon$ for every $t\geq 0$. If there exists $\hat{t}$ such that $\textbf{E}\|X(\hat{t})\|^{p}=\epsilon$ and $\textbf{E}\|X(\tau)\|^{p}<\epsilon$ for $\tau\in [0,\hat{t}]$. Then,
			
			\begin{align*}
				\textbf{E}\|X(\hat{t})\|^{p}&\leq 6^{p-1}M^{p}T^{p(\alpha-1)}\delta\\ &+6^{p-1}\Big[L_{g}^{p}+ L^{p}_{g}\|A\|^{p}M^{p}\left(\frac{p-1}{p\alpha-1}\right)^{p-1} T^{p\alpha-1}\\
				&+L_{b}^{p}M^{p}\left(\frac{p-1}{p\alpha-1}\right)^{p-1} T^{p\alpha-1}+C_{p}L_{\sigma}^{p}M^{p}\left( \frac{T^{2\alpha-1}}{2\alpha-1}\right)^{p/2}\Big]\epsilon <\epsilon,
			\end{align*}
			which contradicts to the definition of $\hat{t}$. Therefore, the mild solution of \eqref{fstoc} is asymptotically stable in $p$th moment.
		\end{proof}
		\begin{remark}
			In particular, if we replace Riemann-Liouville fractional derivative with Caputo one when $p=2$, we have already proved the existence and uniqueness of mild solution to Caputo FSNDEs in \cite{arzu}. By the help of \cite{arzu}, Theorem \ref{stabilitytheorem} can be adapted with regard to the appropriate conditions as follows:
			\begin{theorem}
				Suppose that the Assumptions \ref{A1}-\ref{A2} hold. Then, Caputo FSNDEs are mean square asymptotically stable if
				\begin{align*}
					4\Big( L_{g}^{2}\|A\|^{2}M^{2} \frac{T^{2\alpha-1}}{2\alpha-1}+L_{b}^{2}M^{2} \frac{T^{2\alpha-1}}{2\alpha-1}+L_{\sigma}^{2}M^{2} \frac{T^{2\alpha-1}}{2\alpha-1}\Big)<1.
				\end{align*}
			\end{theorem}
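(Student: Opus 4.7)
The plan is to transcribe the Banach contraction / two-stage stability argument of Theorem \ref{stabilitytheorem} to the Caputo setting with $p=2$. The first step is to recall from \cite{arzu} the mild-solution formula for the Caputo analogue of \eqref{fstoc}, which has the same convolution structure as \eqref{RLFstoc} but with $E_{\alpha}(t^{\alpha}A)[\rho - g(0,\rho)]$ in place of $t^{\alpha-1}E_{\alpha,\alpha}(t^{\alpha}A)\rho$. Because this leading term is bounded at $t=0$, there is no $t^{\alpha-1}$-type singularity, so I can work directly in the unweighted space $\mathscr{D}$ and define $\Psi:\mathscr{D}\to\mathscr{D}$ in analogy with \eqref{T.oper} but without the $t^{1-\alpha}$ factors.

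Next, I would bound $\textbf{E}\|(\Psi X)(t)-(\Psi Y)(t)\|^{2}$ by splitting into four pieces via \eqref{ineqq} with $n=4$ (the direct $g$, the convolution $Ag$, the drift $b$, and the diffusion $\sigma$). For the neutral convolution and drift terms I apply the Cauchy-Schwarz inequality \eqref{cauchy} using
\begin{equation*}
\int_{0}^{t}(t-\tau)^{2(\alpha-1)}\,\mathrm{d}\tau = \frac{t^{2\alpha-1}}{2\alpha-1} \leq \frac{T^{2\alpha-1}}{2\alpha-1},
\end{equation*}
which is finite because $\alpha>\tfrac{1}{2}$, together with $\|E_{\alpha,\alpha}((t-\tau)^{\alpha}A)\|\leq M$ and the Lipschitz constants $L_g,L_b$. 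For the stochastic integral I invoke Lemma \ref{prato} with $p=2$ (the It\^o isometry case), which produces the same $T^{2\alpha-1}/(2\alpha-1)$ factor with $L_\sigma^2$. The direct $L_g^2$ self-term from $\|g(t,X)-g(t,Y)\|^2$ is moved to the left-hand side, just as $1-4^{p-1}L_g^p$ appeared in the denominator in the proof of Theorem \ref{stabilitytheorem}, so that the remaining coefficient is exactly the $\Theta_{C}$ of the hypothesis; strictness below $1$ gives the contraction on $\mathscr{D}$.

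Having $\Psi$ a strict contraction on $\mathscr{D}$, I would then argue stability and asymptotic stability exactly as in the proof of Theorem \ref{stabilitytheorem}: first show $\Psi(\mathscr{D})\subset\mathscr{D}$, so the unique fixed point lies in $\mathscr{D}$ and therefore decays in the mean-square sense; then for the $\epsilon$--$\delta$ stability statement, pick $\delta<\epsilon$ so that the bound on $\textbf{E}\|(\Psi X)(\hat t)\|^{2}$ is strictly smaller than $\epsilon$, and obtain a contradiction with the existence of a first time $\hat t$ where the bound is attained. This packages asymptotic stability into the same template as the Riemann-Liouville case but with every $L_\cdot^p$ replaced by $L_\cdot^2$, every Beta function replaced by $T^{2\alpha-1}/(2\alpha-1)$, and every $t^{1-\alpha}$ weight removed.

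The main obstacle I expect is the $\Psi(\mathscr{D})\subset\mathscr{D}$ step, specifically showing that each convolution integral vanishes as $t\to\infty$ when $\textbf{E}\|X(\tau)\|^{2}\to0$. This genuinely requires Assumption \ref{A0} (which appears to be omitted from the theorem statement, likely a typographical oversight) because only the spectral hypothesis $\mathrm{spec}(A)\subset\Lambda^{s}_{\alpha}$ together with Lemma \ref{Lem21}(i) delivers the integrable decay of $\tau^{\alpha-1}\|E_{\alpha,\alpha}(\tau^{\alpha}A)\|$ on $[0,\infty)$ that is needed to push the tail estimates to zero. Once this is in hand, the rest of the proof is essentially routine bookkeeping of constants.
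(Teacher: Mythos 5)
Your proposal is correct and is essentially the adaptation the paper itself intends: the paper offers no independent proof of this statement, only the remark that Theorem \ref{stabilitytheorem} ``can be adapted'' to the Caputo case with $p=2$ via \cite{arzu}, and your transcription (unweighted space $\mathscr{D}$, fourfold splitting with \eqref{ineqq}, Cauchy--Schwarz and Lemma \ref{prato} producing the $T^{2\alpha-1}/(2\alpha-1)$ factors, the $L_g^2$ self-term absorbed into the denominator, then the same $\epsilon$--$\delta$ contradiction argument) is exactly that adaptation. Your observation that Assumption \ref{A0} is needed for the $\Psi(\mathscr{D})\subset\mathscr{D}$ decay step, despite being omitted from the hypotheses, is also correct and consistent with how the paper uses Lemma \ref{Lem21} in the proof of Theorem \ref{stabilitytheorem}.
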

		\end{remark}
		
		\section{Conclusion} \label{concl}
		In this paper, we have studied asymptotic stability of solutions in $p$th moment to Riemann-Liouville FSNDEs with fractional order $\alpha\in (\frac{1}{2},1)$. For this class of systems, we can easily derive mean square asymptotic stability under appropriate conditions by assuming $p=2$. Also, we can continue the research in the direction by extending this study to the case that $p=p(x)$ is a variable continuous function which greater or equals than $2$. To do so, stochastic integration in $L_{p(x)}$ space should be investigated and Lemma \ref{prato} should be adapted to the case $p=p(x)$ for further research on asymptotic stability, so we leave this as an open problem for future work.
		
		The main contribution of our results is that we have opened the possibility for a cooperative investigation to solve several issues, for instance, combining the methods of this paper to study the control theory, one may solve controllability of nonlinear case of our problem in finite and infinite dimensional spaces and one can also discuss finite-time stability of semiliniear fractional stochastic differential equations.

		\section*{Acknowledgement} 
		The authors express their gratitude to the editor-in-chief, the editor, and the anonymous reviewer for useful feedback and suggestion.
		

\begin{thebibliography}{99}
			\bibitem{arzu}
		A. Ahmadova, N.I. Mahmudov, Existence and uniqueness results for a class of fractional stochastic neutral differential equations, Chaos Solitons\&Fract. 139(2020) https://doi.org/10.1016/j.chaos.2020.110253.
		
	\bibitem{agarwal}
			R.P. Agarwal,  S. Gala, M.A. Ragusa, A regularity criterion in weak  spaces to Boussinesq equations, Mathematics. (2020) 8(6) 920.
		
		\bibitem{ahmadova-mahmudov}
		A. Ahmadova, N.I. Mahmudov, Ulam-Hyers stability of Caputo type stochastic neutral differential equations, Stat. Probab. Lett. 108949. https://doi.org/10.1016/j.spl.2020.108949.
		
	\bibitem{tarif}
			E.E. Eladdad, E.A. Tarif, Analytical Techniques for Solving the Equation Governing the Unsteady Flow of a Polytropic Gas With Time-Fractional Derivative Filomat 34(1) (2020) 231(2).
			\bibitem{Karaman}
			B. Karaman, Analytical investigation for modified Riemann-Liouville fractional equal-width equation types based on (G '/G) expansion technique, Miskolc Mathematical Notes 21(1) (2020) 219(227).
		
		
		\bibitem{arzu-ismail-mahmudov}
		A. Ahmadova, I.T. Huseynov, N.I.Mahmudov, Controllability of fractional stochastic delay dynamical systems, arXiv:2009.10654.
		
		
		\bibitem{barbu}
		D. Barbu, Local and global existence for mild solutions of stochastic  differential equations, Port, Math 55(1998) 411-424.
		\bibitem{cao}
		W. Cao, Q. Zhu, Razumikhin-type theorem for $p$th exponential stability of impulsive stochastic functional differential equations based on vector Lyapunov function, Nonlinear Anal-Hybri 39(2021) 100983.
		
		\bibitem{cong}
		N.D. Cong, T.S. Doan, H.T. Tuan, Asymptotic Stability of Linear Fractional Systems with Constant Coefficients and Small Time-Dependent Perturbations, Vietnam J. Math. 46 (2018) 665–680.
		\bibitem{stefan}
		N.D. Cong, T.S. Doan, S. Sigmund, H.T. Tuan, Linearized asymptotic stability for fractional differential equations, Electron. J. Qual. Theory Differ. Equ. 39, 1–13 (2016).
		
		\bibitem{diethelm}
		K. Diethelm, The analysis of fractional differential equations. An application-oriented exposition using differential operators of Caputo type, Lecture notes in Mathematics, Springer-Verlag, Berlin, 2010.
		\bibitem{farhadi}
		A. Farhadi, G.H. Erjaee, M. Salehi, Derivation of a new Merton’s optimal problem presented by fractional stochastic stock price and its applications. Comput. Math. Appl. 73(2017) 2066–2075.
		
		\bibitem{gorenflo}
		R. Gorenflo, A.A. Kilbas, F. Mainardi, S.V. Rogosin, Mittag-Leffler Functions, Related Topics and Applications, Springer-Verlag, Berlin, 2014.
		\bibitem{ismail}
		I.T. Huseynov, N.I. Mahmudov, Delayed analogue of three-parameter Mittag-Leffler functions and applications to the Caputo type fractional time-delay differential equations. Math Meth Appl Scien. 2020; https://doi.org/10.1002/mma.6761.
		\bibitem{ito}
		K. It\^{o}, Stochastic Differential Equations, Memoirs of the American Mathematical Society, 4(1951) 1-51.
		
		\bibitem{gangaram}
		P. C. Jean, L.S. Gangaram, Stochastic fractional differential equations: Modeling, method and analysis, Chaos, Solitons \& Fract. 45(2012) 279-293.
		\bibitem{feng-li}
		F. Jiang, Y. Shen, A note on the existence and uniqueness of mild solutions to neutral stochastic partial differential equations with non-Lipschitz coefficients, Computers and Mathematics with Applications, 61 (2011) 1590-1594.
		
		\bibitem{kilbas}
		A.A. Kilbas, H.M. Srivastava, J.J. Trujillo, Theory and applications of fractional differential equations, Elsevier Sceince B.V. 2006.
		
		\bibitem{mahmudov}
		N.I. Mahmudov, Existence and uniqueness results for neutral SDEs in Hilbert spaces, Stoch. Anal. Appl. 24(2007) 79-95. https://doi.org/10.1080/07362990500397582.
		\bibitem{Mahmudov}
		N.I. Mahmudov, Fractional Langevin type delay equations with two fractional derivatives. Appl Math Lett. 2020; https://doi.org/10.1016/j.aml.2020.106215.
		
		\bibitem{mahmudov-mckibben}
		N. I. Mahmudov, M. A. McKibben, On the approximate controllability of fractional evolution equations with generalized Riemann-Liouville fractional derivative,” Journal of Function Spaces (2015)  https://doi.org/10.1155/2015/263823.
		\bibitem{mandelbrot}
		B. Mandelbrot, J. Van Ness, Fractional Brownian motions, fractional noises and applications. SIAM Rev. 10(1968) 422–437.
		
		\bibitem{mao1}
		X. Mao, Stability of Stochastic Differential Equations with Respect to Semimartingales, Longman Scientific and Technical, New York, 1991.
		\bibitem{mao2}
		X. Mao, Exponential Stability of Stochastic Differential Equations, Marcel Dekker, New York, 1994.
		\bibitem{miller-ross}
		K.S. Miller, B. Ross, An Introduction to the Fractional Calculus and Fractional Differential Equations, Wiley, New York, 1993.
		
		\bibitem{1903}
		G. Mittag-Leffler, Sur la nouvelle fonction $E_{\alpha}(x)$. C. R. Acad. Sci Paris 1903;137:554–558.
		
		\bibitem{oksendal}
		B. Oksendal, Stochastic Differential Equations: An Introduction with Applications, Springer-Verlag, Heidelberg, 2000.
		
		
		
		\bibitem{podlubny}
		I. Podlubny, Fractional Differential Equations, Academic Press, New York, 1999.
		
		\bibitem{prato}
		G.D. Prato, J. Zabczyk, Stochastic equations in infinite dimensions, Cambridge University Press, Cambridge, 1992.
		
		\bibitem{qian}
		D. Qian, C. Li, R.P. Agarwal, P.J.Y. Wong. Stability analysis of fractional differential systems with Riemann–Liouville derivative. Math Comput Model 52 (2010) 862–874.
		\bibitem{rodkina}
		A.E. Rodkina, On existence and uniqueness of solution of stochastic differential equations with heredity, Stochastic Monographs, 12(1984) 187-200.
		
		\bibitem{sakthivel-ren}
		R. Sakthivel, Y. Ren, and H. Kim, Asymptotic stability of
		second-order neutral stochastic differential equations, Journal
		of Mathematical Physics, 51 (2010)  1–9.
		\bibitem{mahmudov-sakthivel}
		R. Sakthivel, P. Revathi, and N.I. Mahmudov, Asymptotic stability of
		fractional stochastic neutral differential equations with infinite delays, Abstract and Applied Analysis, 51 (2012) 1-9.
		http://dx.doi.org/10.1155/2013/769257.
		\bibitem{luo1}
		R. Sakthivel and J. Luo, Asymptotic stability of nonlinear
		impulsive stochastic differential equations, Statistics and Probability Letters, 79(2009) 1219–1223.
		\bibitem{luo2}
		R. Sakthivel and J. Luo, Asymptotic stability of impulsive
		stochastic partial differential equations with infinite delays,
		Journal of Mathematical Analysis and Applications, 356 (2009) 1–6.
		\bibitem{sakthivel}
		R. Sakthivel, P. Revathi, Y. Ren, Existence of solutions for nonlinear fractional stochastic differential equations, Nonlinear Anal. Theory Methods Appl. 81 (2013) 70–86.
		\bibitem{samko}
		S. G. Samko, A.A. Kilbas, O.I. Marichev, Fractional Integrals and Derivatives: Theory and Applications, Gordon and Breach,  New York, 1993.
		\bibitem{shen}
		G. Shen, R. Sakthivel, Y. Ren, M. Li, Controllability and stability of fractional stochastic functional
		systems driven by Rosenblatt process, Collect. Math. 71 (2020) 63–82. https://doi.org/10.1007/s13348-019-00248-3.
		\bibitem{doan}
		D.T. Son, P.T. Huong, P.E. Kloeden, H.T. Tuan, Asymptotic separation between solutions of Caputo fractional stochastic differential equations. Stoc.Anal.Appl. 36(2018) 654-664. https://doi.org/10.1080/07362994.2018.1440243.
		
		\bibitem{balachandran}
		R. Subalakshmi, K. Balachandran, J.Y.Park, Controllability of semilinear stochastic functional integro-differential system in Hilbert spaces, Nonlinear Anal-Hybri 3(2009) 39-50.
		
		\bibitem{taniguchi}
		T. Taniguchi, K. Liu, A. Truman, Existence and uniqueness and asymptotic behavior of mild solutions to stochastic functional differential equations in Hilbert spaces, Differential Equations 181(2002) 72-91.
		\bibitem{tien}
		D.N. Tien,  Fractional stochastic differential equations with applications to finance. J. Math. Anal. Appl. 397(2013) 334–348.
		\bibitem{htuan}
		H. T. Tuan, A. Czornik, J.J. Nieto, M. Niezabitowski, Global attractivity for some classes of Riemann–Liouville fractional differential systems, arXiv:1709.00210.
		\bibitem{ragusa}
			M.A. Ragusa, A. Tachikawa, Regularity for minimizers for functionals of double phase with variable exponents, Adv. Nonlinear Anal. 9(2020) 710-728.
		
		\bibitem{wangj}
		J. Wang, L. Lv, Y. Zhou, New concepts and results in stability of fractional differential equations, Commun. Nonlinear Sci. Numer. Simul. 17 (2012) 2530-2538. https://doi.org/10.1016/j.cnsns.2011.09.030.
		
		\bibitem{wang}
		Y. Wang, S. Zheng, W. Zhang, J. Wang, Complex and entropy of fluctuations of agent-based interacting financial dynamics with random jump. Entropy. 19(2017) 512. https://doi.org/10.3390/e19100512.
		
		\end{thebibliography}

	\end{document}